\def\ds{\displaystyle}
\def\G{\mathcal{G}}
\def\A{\mathcal{A}}
\def\bx{\mathbf{x}}
\newtheorem{theorem}{Theorem}[section]
\newtheorem{remark}[theorem]{Remark}
\newtheorem{proposition}[theorem]{Proposition}
\newtheorem{lemma}[theorem]{Lemma}
\newtheorem{definition}[theorem]{Definition}
\title{Chordal graphs with bounded tree-width}
\author{Jordi Castellv\'i \and Michael Drmota \and Marc Noy \and Cl\'ement Requil\'e}
    \noindent\textsc{Jordi Castellv\'i}\\
    \noindent\textsc{Michael Drmota}\\
    \noindent\textsc{Marc Noy}\\
    \noindent\textsc{Cl\'ement Requil\'e}\\
\begin{document}
\maketitle

\begin{abstract}
Given $t\ge 2$ and $0\le k\le t$, we prove that the number of labelled $k$-connected chordal graphs with $n$ vertices and tree-width at most $t$ is asymptotically $c n^{-5/2} \gamma^n n!$, as $n\to\infty$, for some constants $c,\gamma >0$ depending on $t$ and $k$.
Additionally, we show that the number of $i$-cliques ($2\le i\le t$) in a uniform random $k$-connected chordal graph with tree-width at most $t$ is normally distributed as $n\to\infty$.

The asymptotic enumeration of  graphs of tree-width at most $t$ is wide open for $t\ge 3$.
To the best of our knowledge, this is the first non-trivial class of graphs with bounded tree-width where the asymptotic  counting problem is solved. 
Our starting point is the work of Wormald [Counting Labelled Chordal Graphs, \textit{Graphs and Combinatorics} (1985)], were an algorithm is developed to obtain the exact number of labelled chordal graphs on $n$ vertices.
\end{abstract}


\section{Introduction}\label{sec:intro}

Tree-width is a fundamental parameter in structural and algorithmic graph theory, as illustrated for instance in \cite{CFKLMPPS15}. 
It can be defined in terms of tree-decompositions or equivalently in terms of  $k$-trees.
A $k$-tree is defined recursively as either a complete graph on $k+1$ vertices or a graph obtained by adjoining a new vertex adjacent to a $k$-clique of a $k$-tree. 
The tree-width of a graph $\Gamma$ is  the minimum $k$ such that $\Gamma$ is a subgraph of a $k$-tree.
In particular, $k$-trees are the maximal graphs with tree-width at most $k$.
The number of $k$-trees on $n$ labelled vertices was independently shown \cite{BP69,M69} to be
\begin{equation}\label{eq:estimate_ktrees}
    \binom{n}{k}(k(n-k) + 1)^{n-k-2} = \frac{1}{\sqrt{2\pi}\, k!\, k^{k+2}}\, n^{-5/2}\, (ek)^n\, n!\, (1 + o(1)),
\end{equation}
where the estimate holds for $k$ fixed and $n\to\infty$.
However, there are relatively few results on the enumeration of graphs of given tree-width or on properties of random graphs with given tree-width.
Graphs of tree-width one are forests (acyclic graphs) and their enumeration is a classical result, while graphs of tree-width at most two are series-parallel graphs and were first counted in \cite{BGKN07}.
The problem of counting graphs of tree-width three is still open.
From now on, we will use $t$ to denote the tree-width while $k$ will denote the connectivity of a graph. 
All graphs we consider are labelled. 

Given that tree-width is non-increasing under taking minors, the class of graphs with tree-width at most $t$ is `small' when $t$ is fixed, in the sense that the number $g_{n,t}$ of labelled graphs with $n$ vertices and tree-wdith at most $t$ grows at most like $c^n n!$ for some $c>0$ depending on $t$ (see \cite{NSTW06,DN10}). 
The best bounds known for $g_{n,t}$ are, up to lower order terms, 
\begin{equation*}
    \left(\frac{2^tt n}{\log t}\right)^n 
    \le g_{n,t} \le ( 2^tt n)^n.
\end{equation*}
The upper bound follows by considering all possible subgraphs of $t$-trees, and the lower bound uses a suitable construction developed in \cite{BNS18}.    
In the present work we determine the asymptotic number of labelled chordal graphs with tree-width at most~$t$, following the approach in \cite{FS09} and \cite{D09}, and based on the analysis of systems of equations satisfied by generating functions.

A graph is chordal if every cycle of length greater than three contains at least one chord, that is, an edge connecting non-consecutive vertices of the cycle.
Chordal graphs have been extensively studied in structural graph theory and graph algorithms (see for instance \cite{G04}), but not so much from the point of view of enumeration. 
Wormald \cite{W85} used generating functions to develop a method for finding the exact number of chordal graphs with $n$ vertices for a given value of $n$. 
It is based on decomposing chordal graphs into $k$-connected components for each $k \ge 1$. 
As remarked in \cite{W85}, it is difficult to define the $k$-connected components of arbitrary graphs for $k>3$, but for chordal graphs they are well defined. 
Wormald  introduces generating functions $C_k(x)$ for $k$-connected chordal graphs and finds an equation linking $C_k(x)$ and $C_{k+1}(x)$ reflecting the decomposition into $k$-connected components. 
This is precisely the starting point of our work.

An important parameter in \cite{W85} is the size $w$ of the largest clique. 
For chordal graphs one can show that the tree-width is equal to $w-1$, hence bounding the tree-width by $t$ is the same as bounding $w$ by $t+1$. 
The parameter $w$ also plays a substantial r\^ole in showing that almost all chordal graphs are split graphs \cite{BRW85}, which in turn implies that the number of chordal graphs with $n$ vertices is of order $2^{n^2/4}(1 + o(1))$. 
  
For fixed $n,t\ge 1$ and $0\le k\le t$, let $\mathcal{G}_{t,k,n}$ be the set of $k$-connected chordal graphs with $n$ labelled vertices and tree-width at most $t$.
Our two main results are the following.

\begin{theorem}\label{thm:enumeration}
For $t\ge 1$ and $0\le k\le t$, there exist  constants $c_{t,k} > 0$ and $\gamma_{t,k}>1$ such that
\begin{equation*}
    |\mathcal{G}_{t,k,n}| = c_{t,k}\, n^{-5/2}\, \gamma_{t,k}^n\, n! \, \left(1 + O\left(n^{-1}\right)\right)
    \qquad\text{as } n\to\infty.
\end{equation*}
\end{theorem}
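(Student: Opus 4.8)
The plan is to set up a system of generating-function equations that encodes Wormald's decomposition of $k$-connected chordal graphs into $(k+1)$-connected components, and then to apply the analytic machinery of singularity analysis (as in \cite{FS09}) together with the transfer theorems for systems of functional equations (as in \cite{D09}) to extract the $cn^{-5/2}\gamma^n n!$ asymptotics. Concretely, let $C_{t,k}(x)$ denote the exponential generating function of $\mathcal{G}_{t,k,n}$ (marking vertices, possibly with a secondary variable to keep track of a distinguished clique, which is how the gluing along cliques will be expressed). The tree-width constraint enters simply as a truncation: bounding the clique size by $t+1$ makes the recursion in $k$ terminate, since a $(t+1)$-connected chordal graph of tree-width $\le t$ can only be the complete graph $K_{t+1}$, giving an explicit polynomial base case $C_{t,t+1}(x)$. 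The key structural input is Wormald's relation linking $C_{t,k}$ and $C_{t,k+1}$: every $k$-connected chordal graph is built by taking a suitable tree-like arrangement of its $(k+1)$-connected components glued along $k$-cliques, which translates into a composition scheme — roughly, $C_{t,k}$ is obtained from $C_{t,k+1}$ by a substitution of the form "replace each $k$-clique by a rooted structure," yielding an equation of the type $y = x^k/k! \cdot \Phi(y)$ or a smooth perturbation thereof, where $\Phi$ is built from $C_{t,k+1}$.

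The first step is therefore to make this decomposition precise at the level of generating functions, being careful about the pointing/rooting at a $k$-clique and about symmetry factors (the $\aut$ of $K_k$), so that the combinatorial bijection underlying \cite{W85} becomes an identity of formal power series. The second step is to analyze the resulting finite tower of equations $C_{t,t+1} \to C_{t,t} \to \cdots \to C_{t,0}$ from the top down. At each level the equation will have the shape of a single smooth implicit equation $F(x,y)=0$ (or a strongly connected positive system) to which the Drmota–Lalley–Woods framework applies: one checks that the system is positive, aperiodic, strongly connected (or can be reduced to an irreducible "core"), and that at the dominant singularity the relevant Jacobian is singular with $F_{yy}\ne 0$, which forces a square-root singularity $y(x) = y(\rho) - c\sqrt{1-x/\rho} + \cdots$. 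Singularity analysis then gives coefficients of order $\rho^{-n} n^{-3/2}$ for the EGF, i.e. $n!\,\gamma^n n^{-3/2}$ for the counts — but since we are counting \emph{connected} (indeed $k$-connected) structures and typically obtain $|\mathcal{G}_{t,k,n}|$ via $n[x^n]$ of a pointed/derivative version, or via an exp-log relation, the exponent shifts to $n^{-5/2}$, exactly as in \eqref{eq:estimate_ktrees}. Propagating a square-root singularity through a composition $C_{t,k}(x) = $ (analytic function of $x$ and $C_{t,k+1}$) preserves the square-root type and only changes $\rho$ and the constants, so the $n^{-5/2}\gamma^n n!$ shape is inherited at every level down to $k=0$; the error term $O(n^{-1})$ comes from the standard full singular expansion.

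The main obstacle I anticipate is \textbf{not} the final singularity analysis but verifying the hypotheses of the transfer theorem at each level — in particular establishing that the generating-function system is strongly connected and that the dominant singularity is of the "good" branch-point type rather than being inherited from the singularity of a subsystem (a so-called "singularity at the boundary of the domain" situation). One must rule out that the singularity of $C_{t,k}$ comes from $C_{t,k+1}$ hitting its own singularity (which would typically still give a square root, but requires a separate argument) versus the branch point of the level-$k$ equation itself; handling this dichotomy uniformly in $k$ and $t$, and checking the strict inequality that selects the branch point, is the delicate part. A secondary technical point is justifying the convergence/analyticity of the involved series in a complex neighbourhood — i.e. that the substitution operations defining the $C_{t,k}$ are analytic maps on suitable spaces of power series — which is needed before singularity analysis can be invoked; here the boundedness of tree-width is essential, as it makes all the combinatorial "bricks" finite and keeps the radii of convergence strictly positive. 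Once these are in place, the normal-limit law for the number of $i$-cliques (the paper's second theorem) follows by the quasi-powers theorem after introducing an extra variable marking $i$-cliques and checking that it perturbs the singularity analytically.
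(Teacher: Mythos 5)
Your plan follows the same overall architecture as the paper (Wormald's decomposition into $(k+1)$-connected components glued along $k$-cliques, the explicit base case $K_{t+1}$, a finite tower of implicit equations analysed by singularity analysis and a transfer theorem), but the two steps you treat as routine are exactly where the work lies, and as stated they do not go through. First, a generating function in the vertex variable alone (even with a marker for one distinguished root clique) cannot express the gluing step: the recursion substitutes a rooted graph into \emph{every} $k$-clique of each $(k+1)$-connected component, and the number of $k$-cliques is not determined by the number of vertices, so there is no univariate composition of the form $y=x^k/k!\cdot\Phi(y)$ encoding it. One is forced to mark $j$-cliques for all $j\le t$, as in the paper, where the gluing becomes the clean substitution $x_k\mapsto x_kG_k^{(k)}(\mathbf{x})$ in \eqref{eq:recursive}; the price is that the composition at level $k$ acts on the variable $x_k$, which changes from level to level while the final coefficient extraction is in $x_1$, and this is why the paper needs the ``proper singularity function / fully movable / leading-variable change'' machinery of Lemmas~\ref{lem:singularity_change} and~\ref{lem:diff_int}. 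Relatedly, your mechanism for the exponent $-5/2$ is not right as stated: for $2\le k\le t$ there is no exp--log relation, and unrooting is at a $k$-clique rather than at a vertex, so one cannot simply trade a factor of $n$; in the paper the $-5/2$ comes from the unrooted $G_k$ acquiring a $3/2$-singularity by integrating the clique-rooted series (which has the $1/2$-singularity) with respect to $x_k$ as in \eqref{eq:integrating}, and then moving the singular expansion back to the variable $x_1$ before applying the transfer theorem.

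Second, the dichotomy you correctly single out as the delicate point --- whether the dominant singularity at level $k-1$ is a branch point of the level-$k$ equation or is inherited from level $k$ --- cannot be settled by the Drmota--Lalley--Woods criteria you invoke: at each level there is a single equation $y=\exp(F(\mathbf{x},y))$ in one unknown (no strongly connected polynomial system), and the outer function $F$ is itself singular, with a square-root singularity at finite distance, which is precisely the case those theorems do not cover. The missing idea, supplied by Lemma~\ref{lem:implicit} and Proposition~\ref{prop:inductive_step} of the paper, is that because $F$ has a proper $1/2$-singularity, $\partial F/\partial y$ has a $(-1/2)$-singularity and hence increases to $+\infty$ as $x_1$ approaches the inherited singularity along the positive axis, while it vanishes when the relevant variables are $0$; therefore the branch-point equation $1=e^{F}\,\partial F/\partial y$ is satisfied at a unique point strictly \emph{before} the inherited singularity. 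This resolves the dichotomy in favour of the branch point uniformly at every level, gives the strict inequality $\rho_{k-1}<\rho_k$ of \eqref{eq:prop_inductive_step}, and certifies the square-root (hence, after integration, $3/2$) singularity type that feeds Proposition~\ref{prop:transfer} and produces $c_{t,k}\,n^{-5/2}\,\gamma_{t,k}^n\,n!$. Without an argument of this kind your induction cannot even guarantee that each $G_k$ has an algebraic singularity of the required type, so the proposal has a genuine gap at its central step.
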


Note that by setting $t=k$ in Theorem \ref{thm:enumeration} one recovers the general form of the asymptotic estimate of the number of $k$-trees \eqref{eq:estimate_ktrees}.
The fact that $\gamma_{k,k}=ek$ is reproven in Section \ref{sec:analysis}.
In principle, for fixed $t$ and $k$ the constants $c_{t,k}$ and $\gamma_{t,k}$ can be computed, at least approximately.
Table \ref{tab:values_rho} in Section \ref{sec:conclusion}  displays approximations of $1/\gamma_{t,k}$ for $1\le k\le t\le 7$.

\begin{theorem}\label{thm:limit_law}
Let $t\ge 1$, $0\le k\le t$. For $i\in \{2,\ldots,t\}$ let $X_{n,i}$ denote the number of $i$-cliques in a uniform random graph in $\mathcal{G}_{t,k,n}$, and set ${\bf X}_n = (X_{n,2},\ldots, X_{n,t})$.
Then ${\bf X}_n$ satisfies a multivariate central limit theorem, that is, as $n\to\infty$ we have 
\begin{equation*}
    \frac{1}{\sqrt n} \left( {\bf X}_n - \mathbb{E}\, {\bf X}_n\right) \overset{d}\to N(0,\Sigma),
    \qquad\text{with}\quad
    \mathbb{E}\, {\bf X}_n \sim \alpha n \quad\mbox{and} \quad \mathbb{C}\mathrm{ov}\, {\bf X}_n \sim \Sigma n,
\end{equation*}
and where $\alpha$ is a $(t-1)$-dimensional vector of positive numbers and $\Sigma$ is a $(t-1)\times(t-1)$-dimensional positive semi-definite matrix. 
\end{theorem}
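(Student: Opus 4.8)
The plan is to derive Theorem~\ref{thm:limit_law} from Theorem~\ref{thm:enumeration} by the standard machinery of analytic combinatorics in several variables, specifically the quasi-powers framework of Hwang together with the perturbation-of-singularity method (as in \cite{FS09} and \cite{D09}). First I would introduce auxiliary variables: for each $i\in\{2,\ldots,t\}$ let $u_i$ mark the number of $i$-cliques, write $\mathbf{u}=(u_2,\ldots,u_t)$, and let $C_{t,k}(x,\mathbf{u})=\sum_{n}|\mathcal{G}_{t,k,n}|\,\ex(\prod_i u_i^{X_{n,i}})\,x^n/n!$ be the corresponding multivariate exponential generating function. The key point is that the system of functional equations used to establish Theorem~\ref{thm:enumeration} — the equations linking $C_k$ and $C_{k+1}$ coming from Wormald's decomposition, together with the tree-like ($k$-tree) recursion — can be refined to track cliques of each size: adjoining a new vertex onto a $j$-clique creates one new $(j{+}1)$-clique (and, depending on the decomposition, affects the count of smaller cliques in a way that is a monomial in the $u_i$). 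Hence $C_{t,k}(x,\mathbf{u})$ satisfies the same type of polynomial/functional system as $C_{t,k}(x)=C_{t,k}(x,\mathbf{1})$, now with coefficients depending analytically on $\mathbf{u}$ in a neighbourhood of $\mathbf{1}$.

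Next I would invoke the analytic structure established in the proof of Theorem~\ref{thm:enumeration}: at $\mathbf{u}=\mathbf{1}$ the dominant singularity of $x\mapsto C_{t,k}(x,\mathbf{1})$ is a square-root branch point at $x=\rho_{t,k}=1/\gamma_{t,k}$, arising from a simple vanishing of a Jacobian (a ``smooth branch point'' of the system), and this is what produces the $n^{-5/2}\gamma^n n!$ asymptotics. By the implicit function theorem / Weierstrass preparation applied with parameters, for $\mathbf{u}$ in a complex neighbourhood of $\mathbf{1}$ the singularity persists as a branch point at an analytic point $\rho_{t,k}(\mathbf{u})$, with $\rho_{t,k}(\mathbf{1})=\rho_{t,k}$, and the singular expansion remains of square-root type with analytically varying coefficients. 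One has to check the non-degeneracy conditions: that the relevant subsystem is strongly connected (aperiodicity/irreducibility of the dependency graph of the $C_j$'s), so that a single dominant singularity governs all of them simultaneously, and that it stays dominant and isolated under small perturbations of $\mathbf{u}$ — this is routine once the structure at $\mathbf{u}=\mathbf{1}$ is in hand. Transfer theorems then give
\[
[x^n]\,C_{t,k}(x,\mathbf{u}) = c(\mathbf{u})\, n^{-5/2}\, \rho_{t,k}(\mathbf{u})^{-n}\,\bigl(1+O(n^{-1})\bigr),
\]
uniformly for $\mathbf{u}$ near $\mathbf{1}$, with $c$ and $\rho_{t,k}$ analytic and non-vanishing there.

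Then the moment generating function of $\mathbf{X}_n$ is $\ex\bigl(e^{\mathbf{s}\cdot\mathbf{X}_n}\bigr)=[x^n]C_{t,k}(x,e^{\mathbf{s}})\big/[x^n]C_{t,k}(x,\mathbf{1})$, which by the above equals $\frac{c(e^{\mathbf{s}})}{c(\mathbf{1})}\bigl(\rho_{t,k}(\mathbf{1})/\rho_{t,k}(e^{\mathbf{s}})\bigr)^n\bigl(1+O(n^{-1})\bigr)$, i.e.\ a quasi-power with exponent $n$ and base $A(\mathbf{s})=\rho_{t,k}(\mathbf{1})/\rho_{t,k}(e^{\mathbf{s}})$ analytic near $\mathbf{s}=\mathbf{0}$ with $A(\mathbf{0})=1$. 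Hwang's quasi-powers theorem (multivariate version, see \cite{FS09}) then yields the central limit theorem with $\mathbb{E}\,\mathbf{X}_n\sim \alpha n$ and $\mathbb{C}\mathrm{ov}\,\mathbf{X}_n\sim\Sigma n$, where $\alpha=\nabla\log A(\mathbf{0})$ and $\Sigma=\mathrm{Hess}\,\log A(\mathbf{0})$; equivalently $\alpha_i=-\partial_{s_i}\log\rho_{t,k}(e^{\mathbf{s}})|_{\mathbf{0}}$ and similarly for $\Sigma$. Positivity of each $\alpha_i$ follows because $i$-cliques occur with positive density — e.g.\ already along the $k$-tree subfamily, or by a direct combinatorial injection showing $X_{n,i}\ge \varepsilon n$ on a positive-probability event — and positive semi-definiteness of $\Sigma$ is automatic as a limiting covariance matrix.

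The main obstacle I expect is not the analytic transfer, which is essentially automatic once Theorem~\ref{thm:enumeration} is proved, but rather \emph{setting up the multivariate system correctly}: one must verify that marking $i$-cliques is compatible with Wormald's $k$-connected decomposition — that the number of $i$-cliques is additive over the decomposition in the way the generating-function composition requires, with no double counting of cliques lying inside shared separators — and that, after this refinement, the positivity/irreducibility hypotheses needed to keep the branch point smooth and dominant for $\mathbf{u}$ near $\mathbf{1}$ still hold. A secondary point requiring care is ruling out degeneracy of $\Sigma$ if one wants more than positive semi-definiteness (the statement only claims semi-definiteness, so strictly this can be skipped), and confirming that the error term $O(n^{-1})$ is uniform in $\mathbf{u}$ over a fixed neighbourhood of $\mathbf{1}$, which is what legitimises differentiating the asymptotic expansion to read off $\alpha$ and $\Sigma$.
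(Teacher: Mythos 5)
Your proposal follows essentially the same route as the paper: the paper's generating functions $G_k(\bx)$ already carry a variable $x_j$ marking $j$-cliques for every $j$, the singular structure is propagated through the decomposition (Propositions~\ref{prop:init_induction} and \ref{prop:inductive_step}) uniformly in the secondary variables near $1$, the Transfer Theorem (Proposition~\ref{prop:transfer}) gives the coefficient asymptotics with analytically varying singularity $\rho_k(x_2,\ldots,x_t)$, and the quasi-powers theorem of \cite{D09} then yields the multivariate CLT — exactly your plan. The only difference is one of emphasis: what you describe as a "routine" perturbation step is where the paper invests its main technical effort (the notions of proper, fully movable singularity functions and Lemmas~\ref{lem:singularity_change}--\ref{lem:implicit}, needed because the leading variable changes from one level of the recursion to the next), and the clique-marking compatibility you flag as a concern is built into the combinatorial setup of Section~\ref{sec:combi} from the outset rather than retrofitted.
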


Let us mention that more structural asymptotic results can be expected.
Notably, the class 
of chordal graphs with tree-width at most $t$ is \textit{subcritical} in the sense of \cite{DFKKR11}, as further discussed in the concluding Section~\ref{sec:conclusion}. 
It follows from \cite{PSW16} that the uniform random connected chordal graph with tree-width at most $t$ 
 with distances rescaled by $1/\sqrt{n}$ admits the \textit{Continuum Random Tree} (CRT) \cite{A91} as a scaling limit, multiplied by a constant that depends on $t$.

The proofs of Theorems~\ref{thm:enumeration} and \ref{thm:limit_law} are based on a recursive decomposition of chordal graphs translated in the combinatorial language of generating functions, the so-called \textit{symbolic method} \cite{FS09}, into a system of functional equations explicited in Section~ \ref{sec:combi} and analysed in Section~\ref{sec:analysis} by means of analytic methods~\cite{FS09}.
More precisely, in Subsection~\ref{subsec:unicity_components} we show the unicity of the decomposition of chordal graphs into their $k$-connected components.
This is translated in Subsection~\ref{subsec:system_equations} into a system of funtional equations satisfied by exponential generating functions encoding the numbers of graphs in each class.
In Subsection~\ref{subsec:combinatorial_integration} we prove an alternative encoding of an integral operator, which is instrumental for computing the numerical values in Table~\ref{tab:values_rho}.
The subsequent asymptotic analysis is rather delicate as it involves singularity functions depending on several variables. 
Our notions of \textit{proper} and \textit{fully movable singularity functions} are key ingredients, and are defined along several technical notions in Subsection~\ref{subsec:definitions_singularity}.
Some useful propeties are proven in Subsection~\ref{subsec:tansfer_propeties}, before embarking in the proofs of the main results in Subsection~\ref{subsec:proofs}.


\section{Decomposition of chordal graphs}\label{sec:combi}

All graphs considered in this work will be simple and labelled, that is, with vertex-set $[n]$.
Let $\Gamma$ be a graph and $k\geq 1$.
A~$k$-separator of $\Gamma$ is a subset of $k$ vertices whose removal disconnects $\Gamma$.
The graph $\Gamma$ is said to be $k$-connected if it contains no $i$-separator for $i\in[k-1]$.
Notice that with this definition we consider the complete graph on $k$ vertices to be $k$-connected, for any $k\ge 1$, contrary to the usual definition of connectivity (see for instance \cite{D16}).
A $k$-connected component of $\Gamma$ is a $k$-connected subgraph that is maximal, in term of subgraph containment, with that property.

An essential consequence of chordality is that $k$-connected chordal graphs admit a unique decomposition into $(k+1)$-connected components through its $k$-separators.
This is the subject of the next section.

\subsection{Unicity of the components}\label{subsec:unicity_components}

The following well-known result will play a central role in our definition of the decomposition of a chordal graph into $k$-connected components.
For completeness we provide a short proof.
\begin{proposition}[Dirac \cite{D61}]\label{prop:dirac}
In a chordal graph every minimal separator is a clique. 
\end{proposition}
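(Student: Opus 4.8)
The plan is to prove the contrapositive-free statement directly by contradiction, exploiting chordality to produce a forbidden induced cycle. Let $S$ be a minimal separator of the chordal graph $\Gamma$, meaning $S$ separates two vertices $a$ and $b$ lying in distinct components $A$ and $B$ of $\Gamma - S$, and no proper subset of $S$ has this property. The key observation is that minimality forces every vertex of $S$ to have a neighbour in $A$ and a neighbour in $B$: if some $s \in S$ had no neighbour in $A$, then $S \setminus \{s\}$ would already separate $a$ from $b$, contradicting minimality.

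Suppose, for contradiction, that $S$ is not a clique, so there exist $u, v \in S$ with $uv \notin E(\Gamma)$. Using the observation above, pick a shortest path $P_A$ from $u$ to $v$ with all internal vertices in $A$, and a shortest path $P_B$ from $u$ to $v$ with all internal vertices in $B$; such paths exist because $A$ (resp. $B$) is connected and both $u$ and $v$ have neighbours in it. Concatenating $P_A$ and $P_B$ yields a cycle $C$ of length at least four (since $uv$ is not an edge, each of $P_A, P_B$ has length at least two).

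The final step is to check that $C$ has no chord, contradicting chordality. A chord would either join two vertices of $P_A$ (impossible, as $P_A$ is a shortest $u$--$v$ path through $A$ — a chord would shortcut it), join two vertices of $P_B$ (same argument), or join an internal vertex of $P_A$ to an internal vertex of $P_B$; but the former lies in $A$ and the latter in $B$, which are different components of $\Gamma - S$, so no such edge exists. The only remaining possible chord is $uv$ itself, which we assumed is a non-edge. Hence $C$ is an induced cycle of length $\ge 4$, contradicting the assumption that $\Gamma$ is chordal. Therefore $S$ is a clique.

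I do not expect any serious obstacle here; the argument is standard. The one point requiring a little care is the justification that each vertex of $S$ has a neighbour in both $A$ and $B$ — this is exactly where the \emph{minimality} of the separator is used, and it must be stated cleanly since it is the hypothesis doing the work. Everything else is a routine verification that the constructed cycle is chordless.
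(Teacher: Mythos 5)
Your proof is correct and follows exactly the paper's argument: assume two non-adjacent vertices $u,v$ in a minimal separator, concatenate shortest $u$--$v$ paths through two distinct components of $\Gamma-S$, and observe the resulting cycle of length at least four is chordless, contradicting chordality. You simply spell out the details (existence of neighbours in both components via minimality, and the chord check) that the paper's short proof leaves implicit.
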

\begin{proof}
Let $\Gamma$ be a chordal graph and let $S$ be a minimal separator with at least two vertices. 
Suppose for contradiction that there are $u,v \in S$ such that $uv$ is not an edge. 
Let $A$ and $B$ be different components of $\Gamma - S$, and consider shortest paths $P$ and $Q$ between $x$ and $y$ whose inner vertices are, respectively, in $A$ and $B$. 
Then the concatenation of $P$ and $Q$ is a chordless cycle of length at least four, contradicting the fact that $\Gamma$ is chordal. 
\end{proof}

For the rest of this section, we fix $k\geq 1$.
\begin{definition}\label{def:slices}
Let $\Gamma$ be a $k$-connected chordal graph with a $k$-separator $S$, and, for $m\ge 1$, let $C_1, \ldots, C_m$ be the (possibly empty) connected components of $\Gamma - S$.
Then, for $i\in [m]$, the induced subgraphs $\Gamma_i = \Gamma[V(C_i)\cup S]$ will be called the slices of $\Gamma$, obtained after cutting $\Gamma$ through $S$.
\end{definition}
Note that by Proposition \ref{prop:dirac}, $S$ is a clique of $\Gamma$.
Furthermore, as each slice $\Gamma_i$ ($i\in[m]$) contains a copy of $S$, $\Gamma$ can be obtained by identifying together these $m$ copies of $S$.
This operation will be called \textit{gluing} through $S$.
The next Proposition \ref{prop:separating_sets_in_slices} implies that the slices $\Gamma_i$ are $k$-connected. 
\begin{proposition}\label{prop:separating_sets_in_slices}
    Let $\Gamma$ be a $k$-connected chordal graph with a $k$-separator $S$ inducing the slices $\Gamma_1, \ldots, \Gamma_m$.
    If, for some $i\in [m]$, $\Gamma_i$ has a separator $T$, then $T$ is also a separator of $\Gamma$.
    Furhtermore, $T\neq S$ is a $k$-separator of $\Gamma$ if it is a separator of the slice $\Gamma_i$ it belongs to.
\end{proposition}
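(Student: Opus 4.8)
The plan is to prove the two assertions in turn, using chordality and the $k$-connectivity of $\Gamma$ in an essential way. Recall that each slice $\Gamma_i=\Gamma[V(C_i)\cup S]$ contains $S$ as a clique, and $\Gamma$ is recovered by gluing the slices along $S$.

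First I would prove that any separator $T$ of a slice $\Gamma_i$ is a separator of $\Gamma$. Suppose $T$ separates $\Gamma_i$, say vertices $a,b\in V(\Gamma_i)\setminus T$ lie in different components of $\Gamma_i-T$. I claim $a$ and $b$ lie in different components of $\Gamma-T$ as well. Any $a$--$b$ path in $\Gamma$ that avoids $T$ must, if it leaves $V(\Gamma_i)$, pass through the cut set $S$ (since $S$ separates $V(C_i)$ from the rest of $\Gamma$); but $S$ is a clique, so one can shortcut any excursion outside $\Gamma_i$ by a single edge inside $S\subseteq V(\Gamma_i)$, obtaining an $a$--$b$ walk, hence a path, entirely within $\Gamma_i$ and still avoiding $T$ — a contradiction. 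This handles the case $a,b\notin S$; the only subtlety is when $a$ or $b$ lies in $S$, but since $S$ is a clique no separator of $\Gamma_i$ disjoint from $\{a,b\}$ can put two vertices of $S$ on opposite sides unless it already contains all but one of them, and in any case the same shortcutting argument applies. So $T$ separates $\Gamma$.

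Next, for the furthermore: take $T\neq S$ a separator of the slice $\Gamma_i$, chosen minimal (a minimal separator of $\Gamma_i$ contained in $T$; by the first part it is a separator of $\Gamma$, and it suffices to treat minimal ones since a $k$-connected graph has no separator of size $<k$, so if the minimal one has size $k$ so does any separator it is contained in — actually I will just argue directly that $|T|\ge k$ and $|T|\le k$). Since $\Gamma$ is $k$-connected and $T$ is a separator of $\Gamma$, we get $|T|\ge k$. For the reverse inequality, note $\Gamma_i$ is an induced subgraph of the $k$-connected chordal graph $\Gamma$, hence chordal, and by Proposition~\ref{prop:dirac} a minimal separator $T'\subseteq T$ of $\Gamma_i$ is a clique. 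The slice $\Gamma_i$ itself has the $k$-clique $S$ as a separator-or-not; the key point is that $\Gamma_i-S=C_i$ is connected, so $S$ is a minimal such cut, and any minimal separator $T'$ of $\Gamma_i$ has $|T'|\le k$: indeed, were $|T'|>k$, then $T'$ being a clique of size $>k$ would, together with the structure of $\Gamma_i$ as $S$ glued to the connected chunk $C_i$, force $C_i$ to contain a separating clique of $\Gamma$ of size $k$ inside it (one can always find within a minimal separator of size $>k$ in a $k$-connected chordal graph a sub-separator of size exactly $k$, contradicting minimality at the level needed) — more cleanly: a $k$-connected chordal graph has the property that every minimal separator has size exactly $k$, because a minimal separator is a clique $K$, and if $|K|>k$ then removing all but $k$ suitably chosen vertices of $K$ still disconnects the two sides (the remaining $k$ vertices of $K$ still form a clique but the two components of $\Gamma-K$ cannot be joined through so few vertices by a counting/chordality argument), contradicting $k$-connectivity only if it fails, i.e. forcing $|K|=k$. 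Applying this to $\Gamma_i$ (which is $k$-connected by the already-established fact that its separators are separators of $\Gamma$) gives $|T'|=k$, and since $T\supseteq T'$ is itself a separator of the $k$-connected $\Gamma$, also $|T|=k$, so $T=T'$ is a $k$-separator of $\Gamma$.

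The main obstacle I expect is the clean proof that in a $k$-connected chordal graph every minimal separator has size exactly $k$ (equivalently, a clique separator cannot be strictly larger than $k$ while remaining minimal): this is where chordality must be combined with connectivity, via the shortest-path/chordless-cycle argument of Proposition~\ref{prop:dirac} together with a careful analysis of how many vertices of a separating clique are actually needed to separate two given components. Once that lemma is isolated and proved, both parts of the Proposition follow by the shortcutting-through-$S$ argument above. I would state and prove that lemma first, then deduce the Proposition.
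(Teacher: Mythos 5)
Your treatment of the first assertion is fine and is essentially the paper's argument: a path in $\Gamma-T$ between two vertices of $\Gamma_i$ can only leave $\Gamma_i$ through vertices of $S$, and since $S$ is a clique (Proposition~\ref{prop:dirac}) each excursion can be shortcut by a single edge of $S\setminus T$, so a separator of a slice separates $\Gamma$.

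The ``furthermore'' part, however, contains a genuine gap. The content of that claim, as the paper's proof makes clear and as the later argument (``the slices obtained after cutting through all $k$-separators contain no $k$-separators'') requires, is the implication: if $T\neq S$ is a $k$-separator of $\Gamma$, then $T$ separates the slice $\Gamma_i$ containing it. The paper proves this by contradiction: if $\Gamma_i-T$ were connected, then for $j\neq i$ the set $T$ is not wholly contained in $\Gamma_j$, so $\Gamma_j-T$ is connected by $k$-connectivity of the slices, and every $\Gamma_j-T$ meets the clique $S\setminus T$; hence $\Gamma-T$ would be connected, contradicting that $T$ is a separator of $\Gamma$. You never address this implication; instead you try to prove the converse for an \emph{arbitrary} separator $T$ of a slice, and that version is false: take $k=1$ and let $\Gamma$ be two triangles glued along an edge $uv$ with apexes $a,b$, plus a pendant vertex $c$ adjacent to $a$; then $S=\{a\}$ is a $1$-separator, $T=\{u,v\}$ separates the slice $\Gamma[\{u,v,a,b\}]$, but $T$ is not a $1$-separator of $\Gamma$. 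The lemma your argument hinges on --- ``in a $k$-connected chordal graph every minimal separator has size exactly $k$'' --- fails for the same example (the minimal separator $\{u,v\}$ has size $2$ in a $1$-connected chordal graph), and your sketch of its proof is backwards: in a minimal separator every vertex has a neighbour in each of the components it separates, so deleting only $k$ of its vertices does \emph{not} keep those components apart; minimality works against the reduction you propose. Finally, even granting that lemma, concluding $|T|=k$ for an arbitrary separator $T$ containing a minimal one of size $k$ is unjustified. The only true instance of the direction you pursue (a $T$ with exactly $k$ vertices that separates a slice is a $k$-separator of $\Gamma$) is an immediate consequence of the first assertion and needs none of this machinery, so the substantive half of the proposition remains unproved in your proposal.
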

\begin{proof}
If $S\subseteq T$, the first claim is direct. 
Suppose now that $v\in S \setminus T$. 
Then, $T$ separates $v$ from some other vertex $w\in \Gamma_i$, because otherwise every vertex would be reachable from $v$ and $T$ would not be a separator. 
Since the vertices in $S$ form a clique, $w$ is in fact separated from all vertices in $S\setminus T$.
Then, in $\Gamma$, the same set $T$ also separates $w$ from $S\setminus T$.

For the second claim, observe that since all the vertices and edges in $\Gamma$ are also in some $\Gamma_i$ and vice-versa, a set of vertices $T\neq S$ is a subclique of $\Gamma$ if and only if it is a subclique of some slice $\Gamma_i$.
Now suppose to a contradiction that $\Gamma_i - T$ is connected. 
Since, for $j\neq i$, $T$ is not entirely contained in any other slice $\Gamma_j$, and $\Gamma_j$ is $k$-connected, it follows that $\Gamma_j - T$ is also connected. 
In particular, every vertex in $\Gamma - T$ is reachable from some vertex $v\in S\setminus T$, a contradiction.
\end{proof}

Consider now the set of slices obtained after cutting $\Gamma$ through all its $k$-separators.
Because they contain no $k$-separators, all these slices are $(k+1)$-connected and form in fact the $(k+1)$-connected components of $\Gamma$.
They are well defined thanks to the following result. 
\begin{proposition}\label{prop:cuts_commute}
Let $\Gamma$ be a $k$-connected chordal graph with $k$-separators $S$ and $T$.
Then the slices $\Gamma_1,\ldots,\Gamma_m$ obtained after cutting $\Gamma$ first through $S$ then through $T$ can be characterised as follows:
\begin{enumerate}
    \item[(i)]
    Let $i\in[m]$. 
    For each connected component $C_i$ of $\Gamma - (S\cup T)$ there will be a slice $\Gamma_i$ with vertex set $V_i$, in such a way that $\Gamma_i = \Gamma[V_i]$. 
    The set $V_i$ contains the vertices in $C_i$, and also contains the vertices of $S$ (resp. $T$) if there is some vertex in $S\setminus T$ (resp. $T\setminus S$) that has a neighbour in $C_i$.

    \item[(ii)]
    If none of the slices described in (i) contains the vertices in both $S$ and $T$, there will be an additional slice $\Gamma_{m+1} = \Gamma[S\cup T]$, and these are all the possible slices.
\end{enumerate}
In particular, doing the cuts first through $T$ and then through $S$ results in the same slices.
\end{proposition}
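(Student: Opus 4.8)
Proposition~\ref{prop:cuts_commute} is what I must prove. The plan is to trace the two successive cuts explicitly and check that the resulting family of slices coincides with the one described in (i)--(ii); since that description is symmetric in $S$ and $T$, the final ``in particular'' then follows at once. We may assume $S\ne T$. The sole input from chordality is Proposition~\ref{prop:dirac}: a $k$-separator of a $k$-connected graph is a minimal separator, hence a clique, so $S$, $T$, $S\cap T$, $S\setminus T$ and $T\setminus S$ are all cliques, the last two nonempty of common size $k-|S\cap T|\ge 1$. I shall also use repeatedly that, $\Gamma$ being $k$-connected, no subclique of $S$ or $T$ of size $<k$ — in particular $S\cap T$ — can separate $\Gamma$.

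First I would analyse the cut through $S$. Write $D_1,\dots,D_r$ for the components of $\Gamma-S$; the nonempty clique $T\setminus S$ is connected, so it lies in a single $D_{j_0}$. For $j\ne j_0$ we have $D_j\cap T=\emptyset$, so $V(D_j)$ is already a connected component of $\Gamma-(S\cup T)$, the slice $\Gamma[V(D_j)\cup S]$ contains no copy of $T$ and is thus final, and $D_j$ is adjacent to $S\setminus T$ (otherwise $S\cap T$ would separate it) but not to $T\setminus S\subseteq D_{j_0}$: this is precisely the slice that (i) prescribes for the component $V(D_j)$. There remains the slice $\Gamma_{j_0}:=\Gamma[V(D_{j_0})\cup S]$, the unique one containing all of $T$.

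The first point requiring care is that $T$ is still a $k$-separator of $\Gamma_{j_0}$. Let $A$ be the component of $\Gamma-T$ containing the clique $S\setminus T$ and let $B$ be any other component of $\Gamma-T$, which exists since $T$ separates $\Gamma$. Then $B$ avoids $S$, is adjacent to $T\setminus S$ (else $S\cap T$ separates it), hence $B\subseteq D_{j_0}$; so $B$ and $S\setminus T$ both lie in $\Gamma_{j_0}-T$, yet a path between them inside $\Gamma_{j_0}-T$ would be a path of $\Gamma-T$ joining the distinct components $B$ and $A$ — impossible, so $\Gamma_{j_0}-T$ is disconnected (in particular $V(D_{j_0})\supsetneq T\setminus S$). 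Cutting $\Gamma_{j_0}$ through $T$, the components of $\Gamma_{j_0}-T=\Gamma\bigl[(V(D_{j_0})\setminus T)\cup(S\setminus T)\bigr]$ are governed by the components $C_1,\dots,C_p$ of $\Gamma-(S\cup T)$ lying in $D_{j_0}$: as $S\setminus T$ is a clique it sits in one component $E^*$ of $\Gamma_{j_0}-T$, which absorbs exactly the $C_\ell$ adjacent to $S\setminus T$, the others remaining separate. Restoring $T$, the slices obtained are $\Gamma[V(C_\ell)\cup T]$ for each $C_\ell$ not adjacent to $S\setminus T$ — each such $C_\ell$ is adjacent to $T\setminus S$, else $S\cap T$ separates it, so this matches (i) — together with the single slice $\Gamma[V(E^*)\cup T]=\Gamma\bigl[S\cup T\cup\bigcup\{V(C_\ell):\,C_\ell\text{ adjacent to }S\setminus T\}\bigr]$.

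Reconciling this last, merged slice with (i)--(ii) is the step I expect to be the main obstacle. Each $C_\ell$ adjacent to $S\setminus T$ is also adjacent to $T\setminus S$: since $C_\ell\subsetneq D_{j_0}$ and $D_{j_0}$ is connected, $C_\ell$ has a neighbour in $D_{j_0}\setminus C_\ell$, and as distinct $C_{\ell'}$ are pairwise non-adjacent in $D_{j_0}$ this neighbour lies in $T\setminus S$. Conversely, any component of $\Gamma-(S\cup T)$ adjacent to $T\setminus S$ lies in $D_{j_0}$ (those in the other $D_j$ are adjacent to $S\setminus T$ only). Hence the $C_\ell$ adjacent to $S\setminus T$ are exactly the components of $\Gamma-(S\cup T)$ ``sandwiched'' between $S$ and $T$, and by the rule in (i) each of them belongs in a slice containing $S\cup T$; they therefore all lie in the single slice above, which is the unique slice meeting both $S$ and $T$. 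When no such $C_\ell$ exists this slice is $\Gamma[S\cup T]$ — precisely the situation and extra slice of (ii) — and otherwise it properly contains $S\cup T$ and (ii) adds nothing. Collecting the slices from the two cuts (the $\Gamma[V(D_j)\cup S]$ with $j\ne j_0$, the $\Gamma[V(C_\ell)\cup T]$, and the merged slice) now reproduces (i)--(ii) exactly. Besides this merging argument, the only other point needing genuine care is the short verification that $T$ survives as a separator of the proper subgraph $\Gamma_{j_0}$; everything else is routine tracking of components through the two cuts.
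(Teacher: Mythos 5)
Your proposal is correct and follows the same overall route as the paper: cut through $S$, note that the clique $T\setminus S$ forces $T$ into a unique slice, cut that slice through $T$, and match the outcome against (i)--(ii), with the symmetry of the resulting description giving the final claim. Where you genuinely diverge is at the one delicate step. The paper asserts that the component of the $S$-slice minus $T$ which contains $S\setminus T$ becomes connected after deleting $S$, so that every component of $\Gamma-(S\cup T)$ produces its own slice; you instead allow several components attached to both $S\setminus T$ and $T\setminus S$ to merge into the single slice containing $S\cup T$, and you prove exactly what is needed to control this case (that $T$ still separates the slice $\Gamma_{j_0}$, and that every merged component is attached to $T\setminus S$ as well). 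Your treatment is in fact the more robust one: such merging can genuinely occur in a chordal graph. Take $k=1$ and the graph on $\{s,u,a,b,p,q\}$ with edges $su,sa,ua,sb,ub,sp,uq$; it is chordal, $S=\{s\}$ and $T=\{u\}$ are $1$-separators, and cutting through $S$ then $T$ (or in the other order) yields the slices $\Gamma[\{s,p\}]$, $\Gamma[\{u,q\}]$ and $\Gamma[\{s,u,a,b\}]$, the last of which contains the two components $\{a\}$ and $\{b\}$ of $\Gamma-(S\cup T)$. So the paper's connectivity claim, and any reading of (i) in which $V_i$ equals $C_i$ together with $S$ and/or $T$, fails in general, whereas your description is valid and matches the literal wording of (i), which only requires $V_i$ to \emph{contain} those vertex sets; what your approach buys is precisely correctness in this merged case, at the price of a slightly longer bookkeeping argument. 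One small caution: under the ``contains'' reading, (i)--(ii) alone do not pin the slices down uniquely, so the commutativity statement should be drawn, as you in effect do at the end, from the explicit symmetric list of slices you derive (components adjacent only to $S\setminus T$, only to $T\setminus S$, and the unique slice absorbing $S\cup T$ and all doubly attached components), rather than from the symmetry of (i)--(ii) as announced in your opening sentence.
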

\begin{proof}
We first consider the slices obtained after cutting only through $S$. 
For $1\leq i \leq m_S$, each of the slices $\Gamma^S_i$ corresponds to a connected component $C^S_i$ of $\Gamma - S$. 
Among these slices there is only one containing $T$ as a subclique, say $\Gamma^S_1$, because the vertices in $T\setminus S$ necessarily belong to the same component $C^S_1$. 
Therefore $\Gamma^S_1$ is the unique slice that will be cut through $T$ while the others stay unchanged. 
For $2\leq i\leq m_S$, each of the other slices $\Gamma^S_i$ contains the vertices of $C^S_i$, which is certainly a connected component of $\Gamma - (S\cup T)$, and the vertices in $S$, but contains no vertex in $T\setminus S$. 
This agrees with $(i)$ because all the vertices in $S$ have a neighbour in $C^S_i$, since $S$ is a minimal separator, while the vertices in $T\setminus S$ have no neighbours in $C^S_i$.
    
We now consider the slices obtained after cutting $\Gamma^S_1$ through $T$. 
Again, for $1\leq i\leq m_T$ each of these slices corresponds to a connected component $C^T_i$ of $\Gamma^S_1 - T$, denoted by $\Gamma^T_i$. 
Among these slices, there is only one containing $S$ as a subclique, say $\Gamma^T_1$, because the vertices in $T\setminus S$ necessarily belong to the same component $C^T_1$. 
The rest of the slices contain no vertex in $S\setminus T$. 
In fact, they are analogous to the slices $\Gamma^S_i$, $2\leq i\leq m_S$, and they agree with $(i)$ for the same reasons. 

There are two possibilities for $C_1^T$: either it contains vertices other than the ones in $S\setminus T$ or it does not. 
In the first case, observe that $C^T_1 - S$ is connected. 
Indeed, if this was not the case $\Gamma^S_1$ would have $S\cup T$ as a separator, while neither $S$ nor $T$ are separators. 
But then there would be a minimal $k$-separator of $\Gamma^S_1$ that is not a clique, which is not possible. 
Therefore, $C^T_1 - S$ is a connected component of $\Gamma - (S\cup T)$, which has some neighbour in $S\setminus T$ and also in $T\setminus S$, since $C^S_1$ is connected. 
This also agrees with $(i)$. 
On the other hand, if $C^T_1$ contains no vertices other than the ones in $S\setminus T$, then it is not a component of $\Gamma - (S\cup T)$ and we are in the case $(ii)$.

Since this characterisation does not depend on the order of the cuts, the last claim follows.
\end{proof}

The $k$-connected components of $\Gamma$ are thus the maximal $k$-connected subgraphs, since, for $i\in [k-1]$, there is a single way of cutting through all the $i$-separators.
And we can uniquely define the $2$-connected components of a connected chordal graph, the $3$-connected components of these $2$-connected components, the $4$-connected components of these $3$-connected components, and so on.
An illustration is given in Figure \ref{fig:components}.
\begin{figure}[htb]
    \centering
    \includegraphics[width=0.5\textwidth]{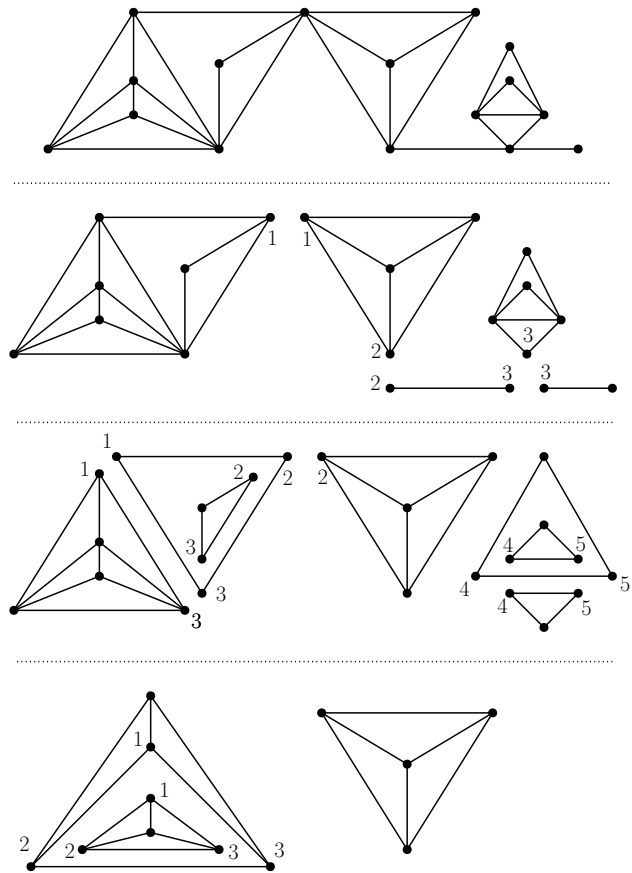}
    \caption{Decomposition of a connected graph with tree-width 3 into its 2, 3 and 4-connected components. Vertices with the same label are identified.}
    \label{fig:components}
\end{figure}
This decomposition is the generalisation of the well-known decomposition of a connected graph into blocks (i.e. maximal 2-connected components).
And it induces, as shown in the next section, a system of functional equations satisfied by the generating function counting chordal graphs of tree-width at most $k$.

\subsection{Functional equations for the generating functions}\label{subsec:system_equations}

Our main tool in this section is the symbolic method, where we will translate the combinatorial decompositions derived in the previous section into functional equations satisfied by the associated generating functions.
We refer the reader to \cite{FS09} for details on the symbolic method, but let us recall that the disjoint union of two classes of graphs ${\cal A}$ and ${\cal B}$, with respective generating function $A(x)$ and $B(x)$, is denoted by the class ${\cal A} \sqcup {\cal B}$ and admits generating function $A(x) + B(x)$; while the class composed of sets of graphs in ${\cal A}$ has generating function $\exp(A(z))$.

For the rest of this section, we fix some $t\ge 1$ and let $\mathcal{G}$ be the family of chordal graphs with tree-width at most $t$. 
For a graph $\Gamma \in \mathcal{G}$ and $j\in[t]$, let us denote by $n_j(\Gamma)$ the number of $j$-cliques of $\Gamma$.
In the rest of the paper, we will write $\bx$ as a short-hand for $x_1,\ldots, x_t$, and define the multivariate (exponential) generating function associated to $\mathcal{G}$ to be
\begin{equation*}
    G(\bx) = G(x_1, \dots, x_t) = \sum_{\Gamma \in \mathcal{G}} \frac{1}{n_1(\Gamma)!} \prod_{j=1}^{t} x_j^{n_j(\Gamma)},
\end{equation*}
Let $g_n$ denote the number of chordal graphs with $n$ vertices and tree-width at most $t$. 
Then,
\begin{equation*}
    G(x, 1, \dots, 1) = \sum_{n\geq 1} \frac{g_n}{n!} x^n.
\end{equation*}
For $0\le k\le t+1$, let $\mathcal{G}_k$ be the family of $k$-connected chordal graphs with tree-width at most $t$ and $G_k(\bx)$ be the associated generating function.
In particular, we have 
\begin{equation}\label{eq:G_t+1}
    G_{t+1}(\bx) = \frac{1}{(t+1)!}\prod_{j\in[t]}x_j^{\binom{t+1}{j}}.
\end{equation}

For other values of $k$, we need to consider graphs rooted at a clique.
Rooting the graph $\Gamma\in \mathcal{G}_k$ at an $i$-clique means distinguishing one $i$-clique $K$ of $\Gamma$ and choosing an ordering of (the labels of) the vertices of $K$.
In order to avoid over-counting, we will discount the subcliques of $K$.
Let $i\in [k]$ and define $\mathcal{G}_{k}^{(i)}$ to be the family of $k$-connected chordal graphs with tree-width at most $t$ and rooted at an $i$-clique.
Let then $G_k^{(i)}(\bx)$ be the associated generating function, where now for $1\le j \le i$ the variables $x_j$ mark the number of $j$-cliques that are not subcliques of the root.
\begin{figure}[h]
    \centering
    \includegraphics[width=0.55\textwidth]{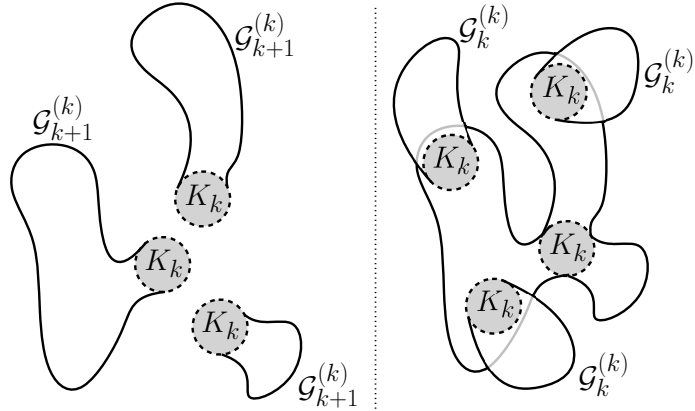}
    \caption{Recursive decomposition of a $k$-connected chordal graph into $(k+1)$-connected components. On the left: a set of graphs from $\mathcal{G}_{k+1}^{(k)}$. On the right: the roots of the graphs are identified and, then, all other $k$-cliques are recursively substituted by graphs in $\mathcal{G}_{k}^{(k)}$ to obtain a grap in $\mathcal{G}_{k}^{(k)}$ whose $(k+1)$-connected components containing the root are the graphs in the set. A label $\mathcal{G}_{k+1}^{(k)}$ or $\mathcal{G}_{k}^{(k)}$ next to a blob means that the blob is any graph from the corresponding class.}
    \label{fig:recursive}
\end{figure}
\begin{lemma}\label{lem:system}
Let $k \in[t]$.
Then the following equations hold:
\begin{align}
    G_{k+1}^{(k)}(\bx) &= k!\prod_{j=1}^{k-1}x_j^{-\binom{k}{j}}\frac{\partial}{\partial x_k} G_{k+1}(\bx) \label{eq:rooting}, \\
    G_k^{(k)}(\bx) &= \exp \left( G_{k+1}^{(k)}\big( x_1, \ldots, x_{k-1}, x_kG_k^{(k)}(\bx), x_{k+1}, \ldots, x_t \big) \right) \label{eq:recursive}, \\
    G_k(\bx) &= \frac{1}{k!}\prod_{j=1}^{k-1}x_j^{\binom{k}{j}}\int G_k^{(k)}(\bx)\,\, dx_k \label{eq:integrating}.
\end{align}
\end{lemma}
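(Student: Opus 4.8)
The plan is to establish the three identities of Lemma~\ref{lem:system} one at a time, each by a direct application of the symbolic method to the combinatorial operations introduced in Section~\ref{subsec:unicity_components}, namely cutting through $k$-separators and gluing through a common $k$-clique. Throughout I will keep careful track of the bookkeeping forced by the convention that in a graph rooted at an $i$-clique the variables $x_1,\dots,x_i$ do \emph{not} count the subcliques of the root.

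\textbf{Equation~\eqref{eq:rooting}.} First I would pass from $\mathcal G_{k+1}$ to $\mathcal G_{k+1}^{(k)}$. Rooting a graph in $\mathcal G_{k+1}$ at a $k$-clique amounts to choosing a $k$-clique and ordering its $k$ vertices. On the level of generating functions, choosing a $k$-clique and marking it is the operator $x_k\frac{\partial}{\partial x_k}$ applied in a "pointing at an $x_k$-atom" sense; but because the root must additionally be \emph{un}counted with respect to its own subcliques, I must divide out the factors $x_j^{\binom kj}$ for $j=1,\dots,k-1$ (the subcliques of the root of size $1\le j\le k-1$) and also remove the one factor $x_k$ corresponding to the root clique itself, which is exactly what differentiating (rather than pointing) does. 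Finally, ordering the $k$ vertices of the root multiplies by $k!$. Collecting these factors gives precisely $k!\prod_{j=1}^{k-1}x_j^{-\binom kj}\frac{\partial}{\partial x_k}G_{k+1}(\bx)$. I would spell out that $\frac{\partial}{\partial x_k}$ distinguishes one unordered $k$-clique and strips its $x_k$-weight, and that no $x_t$, $t>k$, is affected since those cliques are never subcliques of a $k$-clique.

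\textbf{Equation~\eqref{eq:recursive}.} This is the heart of the lemma and the step I expect to be the main obstacle, since it encodes the recursive decomposition illustrated in Figure~\ref{fig:recursive}. By Proposition~\ref{prop:cuts_commute} and the discussion following it, a $k$-connected chordal graph $\Gamma$ decomposes uniquely, through its $k$-separators, into $(k+1)$-connected components, which are glued together along shared $k$-cliques. Rooting $\Gamma$ at a $k$-clique $K$, I would argue that the $(k+1)$-connected components containing $K$ form a \emph{set} — hence the outer $\exp$ — each being a graph in $\mathcal G_{k+1}^{(k)}$ rooted at $K$; and that every \emph{other} $k$-clique appearing in one of these components is itself the root of a further graph in $\mathcal G_k^{(k)}$, substituted recursively. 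The substitution $x_k\mapsto x_kG_k^{(k)}(\bx)$ is exactly the "substitute at every $x_k$-atom" operation of the symbolic method: each $k$-clique of a component in $\mathcal G_{k+1}^{(k)}$ either is the root (already present, contributing the bare $x_k$) or is replaced by a rooted $k$-connected chordal graph hanging off it (contributing an extra $G_k^{(k)}(\bx)$ factor). The variables $x_1,\dots,x_{k-1},x_{k+1},\dots,x_t$ are left untouched because cliques of those sizes are never shared along a $k$-separator and are therefore counted exactly once, inside whichever component contains them. The delicate points to verify are (a) well-definedness and uniqueness of the decomposition, which is already supplied by Propositions~\ref{prop:separating_sets_in_slices} and~\ref{prop:cuts_commute}; (b) that no $j$-clique with $j<k$ is double-counted at a gluing site, which follows because a shared $k$-separator is a clique (Proposition~\ref{prop:dirac}) and its proper subcliques are, by the rooting convention, uncounted in the substituted graphs; and (c) that the recursion is well-founded, i.e. the substituted graphs are strictly smaller, so the formal power series identity is legitimate.

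\textbf{Equation~\eqref{eq:integrating}.} Finally I would undo the rooting, passing from $\mathcal G_k^{(k)}$ back to $\mathcal G_k$. Here $G_k^{(k)}$ counts graphs with a distinguished \emph{ordered} $k$-clique whose subcliques of size $<k$ are uncounted; to recover $G_k$ I must (i) forget the ordering, dividing by $k!$; (ii) restore the weight $\prod_{j=1}^{k-1}x_j^{\binom kj}$ of the subcliques of what was the root; and (iii) restore the $x_k$-weight of the root clique and, crucially, sum over the choice of which $k$-clique was the root. Step (iii) is exactly integration $\int\cdot\,dx_k$: differentiating $G_k$ with respect to $x_k$ picks out and un-weights one $k$-clique in all possible ways, so $\int G_k^{(k)}\,dx_k$ (suitably normalised) inverts this. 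Combining (i)–(iii) yields $G_k(\bx)=\frac1{k!}\prod_{j=1}^{k-1}x_j^{\binom kj}\int G_k^{(k)}(\bx)\,dx_k$, with the constant of integration fixed to $0$ by the requirement that $G_k$ have no term of degree $0$ in $x_k$ (every graph in $\mathcal G_k$ with $k\ge1$ contains a $k$-clique). The main care here is to check that the naive relation $G_k^{(k)} = k!\prod_{j<k}x_j^{-\binom kj}\frac{\partial}{\partial x_k}G_k$ — the analogue of~\eqref{eq:rooting} — holds for $\mathcal G_k$ as well, which it does by the same pointing argument, and then~\eqref{eq:integrating} is its immediate reformulation.
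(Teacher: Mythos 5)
Your proposal is correct and follows essentially the same route as the paper's own proof: all three identities are obtained via the symbolic method, with differentiation/integration in $x_k$ (corrected by $k!$ and the monomial $\prod_{j<k}x_j^{\pm\binom{k}{j}}$) encoding rooting/unrooting at an ordered $k$-clique, and \eqref{eq:recursive} coming from the unique decomposition into $(k+1)$-connected components glued through the root (the $\exp$) with the substitution $x_k\mapsto x_kG_k^{(k)}(\bx)$ at every non-root $k$-clique. In fact your write-up is more detailed than the paper's, which disposes of \eqref{eq:rooting} and \eqref{eq:integrating} as immediate from the definitions; just phrase carefully that in $G_{k+1}^{(k)}$ the root clique itself carries no $x_k$-weight (it is a subclique of the root), so the substitution acts only on the non-root $k$-cliques, with the factor $1$ in $G_k^{(k)}=1+\cdots$ covering the case where nothing is glued there.
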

\begin{proof}
As per the symbolic method, taking the derivative of a generating function with respect to the variable $x_i$ amounts to rooting a graph at an $i$-clique, while taking the integral will correspond to ``unrooting''.
Thus, both Equations \eqref{eq:rooting} and \eqref{eq:integrating} follow from the definition.

On the other hand, Equation \eqref{eq:recursive} is derived from the decomposition of $k$-connected chordal graphs into their $(k+1)$-connected components, and is illustrated in Figure \ref{fig:recursive}.
Indeed, a $k$-connected chordal graph rooted at a $k$-clique $K$ is obtained by gluing through $K$ a set of $(k+1)$-connected graphs $\Gamma_1, \ldots, \Gamma_m$ containg $K$, and then further gluing some $k$-connected chordal graph at each $k$-clique of $\Gamma_i$, other than $K$, for all $i\in[m]$.
Recall that the $k$-clique is itself considered to be $k$-connected and that the variable $x_k$ marks the number of $k$-cliques.
The graphs are rooted at ordered cliques in order to ensure that the gluing process is made in all possible ways.

Thus, following again the symbolic method, the recursive process of gluing through $k$-cliques is encoded by the substitution of $x_k$ by $x_kG_k^{(k)}$, and Equation \eqref{eq:recursive} follows from the fact that sets of $(k+1)$-connected graphs are encoded by taking the exponential of the generating function of $(k+1)$-connected chordal graphs rooted at a $k$-clique.
\end{proof}

Finally, the fact that a graph is the set of its connected components can be translated as
\begin{equation*}
    G(\bx) = G_0(\bx) = \exp(G_1(\bx)),
\end{equation*}
Observe then that one can derive $G_0(\bx)$ from $G_{t+1}(\bx)$ by successively using Identities \eqref{eq:rooting}, \eqref{eq:recursive} and \eqref{eq:integrating} from Lemma \ref{lem:system}, as illustrated in Figure \ref{fig:schema}.
Furthermore, in the steps where Identity \eqref{eq:integrating} is used, one needs to compute a formal integral.
An alternative is to use the dissymmetry theorem for tree-decomposable classes, as presented in Proposition~\ref{prop:species}.
This is the purpose of the next section.
\begin{figure}[h]
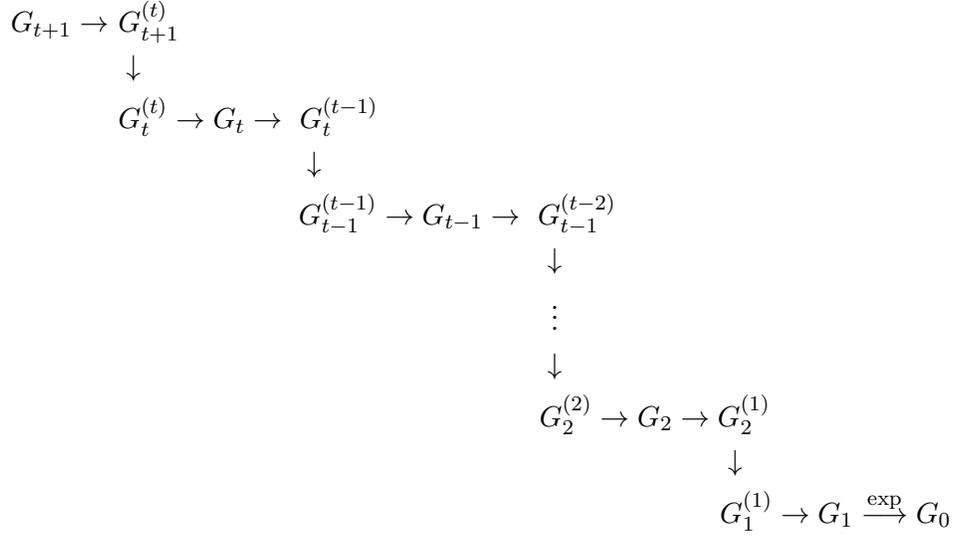

\centering
\begin{align*}
    G_{t+1} \rightarrow \,\, & G_{t+1}^{(t)} \\ 
    & \downarrow \\
    & G_{t}^{(t)} \rightarrow G_{t} \rightarrow \,\, G_{t}^{(t-1)} \\
    & \hspace{2.4cm} \downarrow \\
    & \hspace{2.4cm} G_{t-1}^{(t-1)} \rightarrow G_{t-1} \rightarrow \,\, G_{t-1}^{(t-2)} \\
    & \hspace{5.6cm}\downarrow \\
    & \hspace{5.75cm} \vdots \\
    & \hspace{5.6cm} \downarrow \\
    & \hspace{5.6cm} G_2^{(2)} \rightarrow G_2 \rightarrow G_2^{(1)} \\
    & \hspace{8cm} \downarrow \\
    & \hspace{8cm} G_1^{(1)} \rightarrow G_1 \stackrel{\exp}{\longrightarrow} G_0
\end{align*}
    \caption{The schema to derive $G_0(\bx)$ from $G_{t+1}(\bx)$.}
    \label{fig:schema}
\end{figure}

\subsection{Combinatorial integration}\label{subsec:combinatorial_integration}

In this section, we prove an alternative equation for \eqref{eq:integrating} which does not involve an integral operator.
It is useful when computing the numerical values in Table \ref{tab:values_rho}.

A class of graphs ${\cal A}$ is said to be \textit{tree-decomposable} if for each graph $\Gamma\in {\cal A}$ we can associate in a unique way a tree $\tau(\Gamma)$ whose nodes are distinguishable, for instance by using the labels.
Let us stress that this decomposition is in principle different to the one induced by the classical definition of tree-width.
In our case, to each $\Gamma\in\mathcal{G}_k$ different from the complete graph on $k$ vertices, we associate a unique tree $\tau(\Gamma)$ as follows. 
The tree $\tau(\Gamma)$ admits two different types of nodes, namely $b$ and $c$. 
Nodes of type $b$ represent the $(k+1)$-connected components of $\Gamma$, while those of type $c$ represent the $k$-cliques of $\Gamma$ through which the $(k+1)$-connected components are glued together.
An example of the decomposition of a chordal graph $\Gamma$ of bounded tree-width and its associated tree $\tau(\Gamma)$ is depicted in Figure~\ref{fig:dissymmetry}.

Let ${\cal A}_{\bullet}$ denote the class of graphs in ${\cal A}$ where a node of $\tau(\Gamma)$ is distinguished.
Similarly, ${\cal A}_{\bullet - \bullet}$ is the class of graphs in ${\cal A}$ where an edge of $\tau(\Gamma)$ is distinguished, and ${\cal A}_{\bullet \rightarrow \bullet}$ those where an edge $\tau(\Gamma)$ is distinguished and given a direction.
As presented in \cite{CFKS08}, the \textit{dissymmetry theorem for tree-decomposable classes} is a generalisation of the well-known \textit{dissymetry theorem for trees} of \cite{BLL97}, and allows one to express the class of unrooted graphs in $\A$ in terms of the rooted classes.
\begin{proposition}[Dissymmetry Theorem \cite{CFKS08}]\label{prop:species}
    Let ${\cal A}$ be a tree-decomposable class, then
    \begin{equation*}
        {\cal A} \sqcup {\cal A}_{\bullet \rightarrow \bullet} \simeq {\cal A}_{\bullet} \sqcup {\cal A}_{\bullet - \bullet},
    \end{equation*}
    where $\simeq$ is a bijection preserving the number of nodes.
    In particular, if the encoding trees have no adjacent nodes of the same type then we have
    \begin{equation*}
        {\cal A} \sqcup {\cal A}_{\bullet - \bullet} \simeq {\cal A}_\bullet.
    \end{equation*}
\end{proposition}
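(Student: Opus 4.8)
The plan is to use the classical ``centre of a tree'' argument, exploiting that a bijection moving only the distinguished object on the encoding tree automatically preserves the underlying structure, hence the number of nodes. So I would first reduce the statement to a purely tree-theoretic bijection: fix an abstract finite tree $\tau$ with distinguishable nodes and $n$ nodes (hence $n-1$ edges), and construct a bijection, depending only on $\tau$, between the set $\{\ast\}\cup\{\text{directed edges of }\tau\}$ and the set $\{\text{nodes of }\tau\}\cup\{\text{edges of }\tau\}$. A cardinality check already confirms compatibility, since $1+2(n-1)=n+(n-1)$. Because the rule will be canonical (invariant under isomorphism of $\tau$), it lifts to the tree-decomposable class $\mathcal{A}$, where the unmarked structure in $\mathcal{A}$, the directed-edge-marked structure in $\mathcal{A}_{\bullet\to\bullet}$, the node-marked structure in $\mathcal{A}_\bullet$, and the edge-marked structure in $\mathcal{A}_{\bullet-\bullet}$ all share the same underlying $\Gamma$ with the same $\tau(\Gamma)$.

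\textbf{The canonical centre and the bijection.} Next I would recall that every finite tree has a canonical \emph{centre}, obtained by iteratively removing all leaves until one node $z$ or one edge $\{z_1,z_2\}$ remains; this is isomorphism-invariant. Orient every non-central edge towards the centre: deleting $e=\{u,w\}$ splits $\tau$ into two subtrees, exactly one meeting the centre, and I call the endpoint of $e$ lying in that subtree the \emph{near} endpoint and the other the \emph{far} endpoint; each non-centre node $v$ is the far endpoint of exactly one edge, the first one on the path from $v$ to the centre. The bijection then sends $\ast$ to the centre node $z$ (if the centre is a node) or to the centre edge $\{z_1,z_2\}$ (if it is an edge); it sends a directed non-central edge $(u\to w)$ to the node $u$ if it points towards the centre (i.e.\ $w$ near) and to the edge $\{u,w\}$ if it points away (i.e.\ $u$ near); and, in the centre-edge case, it sends $(z_1\to z_2)$ to $z_2$ and $(z_2\to z_1)$ to $z_1$. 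I would then check surjectivity/injectivity directly: every edge of $\tau$ is the image of exactly one object (the non-central edges via their ``away'' orientation, the central edge, if any, via $\ast$), and every node of $\tau$ is the image of exactly one object (the centre node, if any, via $\ast$; each other node $v$ via the ``towards'' orientation of the unique edge having $v$ as far endpoint; and $z_1,z_2$, in the centre-edge case, via the two orientations of the central edge). Reading the inverse off these descriptions gives $\mathcal{A}\sqcup\mathcal{A}_{\bullet\to\bullet}\simeq\mathcal{A}_\bullet\sqcup\mathcal{A}_{\bullet-\bullet}$, with the number of nodes preserved.

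\textbf{The refined statement.} Under the extra hypothesis that $\tau(\Gamma)$ has no two adjacent nodes of the same type, each edge joins a node of type $b$ to a node of type $c$, so orienting such an edge is equivalent to selecting its endpoint of type $b$ (equivalently, its endpoint of type $c$); forgetting the orientation is then recoverable. This gives a canonical splitting $\mathcal{A}_{\bullet\to\bullet}\simeq\mathcal{A}_{\bullet-\bullet}\sqcup\mathcal{A}_{\bullet-\bullet}$ according to the type of the head. Substituting into the identity above and cancelling one copy of $\mathcal{A}_{\bullet-\bullet}$ from each side yields $\mathcal{A}\sqcup\mathcal{A}_{\bullet-\bullet}\simeq\mathcal{A}_\bullet$.

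\textbf{Main obstacle.} I expect the only delicate part to be the bookkeeping around the centre: treating the centre-edge case so that every node and every edge is hit exactly once, and verifying that each choice made along the way (the centre itself, the orientation towards it, and the type-based splitting of $\mathcal{A}_{\bullet\to\bullet}$ in the refined statement) is canonical. This canonicity is precisely what upgrades the per-tree cardinality identity into genuine isomorphisms $\simeq$ of tree-decomposable classes rather than mere equalities of generating series.
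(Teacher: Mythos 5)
Your proof is correct. Note that the paper itself gives no proof of this proposition: it is quoted from the literature (the dissymmetry theorem for tree-decomposable classes of Chapuy--Fusy--Kang--Shoilekova, generalising the dissymmetry theorem for trees of Bergeron--Labelle--Leroux), and the argument in those references is precisely the centre-of-the-tree bijection you describe, so your write-up essentially reconstructs the standard proof; the bookkeeping in your centre-node and centre-edge cases is accurate, and canonicity of the centre is indeed what lifts the per-tree bijection to the class level. Two minor remarks: the final ``cancellation'' of a copy of $\mathcal{A}_{\bullet-\bullet}$ is legitimate here because the paper's $\simeq$ only asks for a size-preserving bijection and all level sets are finite (alternatively one can make the refined bijection explicit, sending the non-central edge oriented away from the centre to the marked far endpoint, etc.); and in fact the refined identity already holds at this level for any tree via the two-to-one correspondence between directed and undirected edges -- the hypothesis of no adjacent nodes of equal type is what makes the splitting of $\mathcal{A}_{\bullet\to\bullet}$ canonical and usable when the rooted classes are expressed type by type, which is how it is exploited in Lemma~\ref{lem:combinatorial_integration}.
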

\begin{figure}
    \centering
    \includegraphics[width=0.9\textwidth]{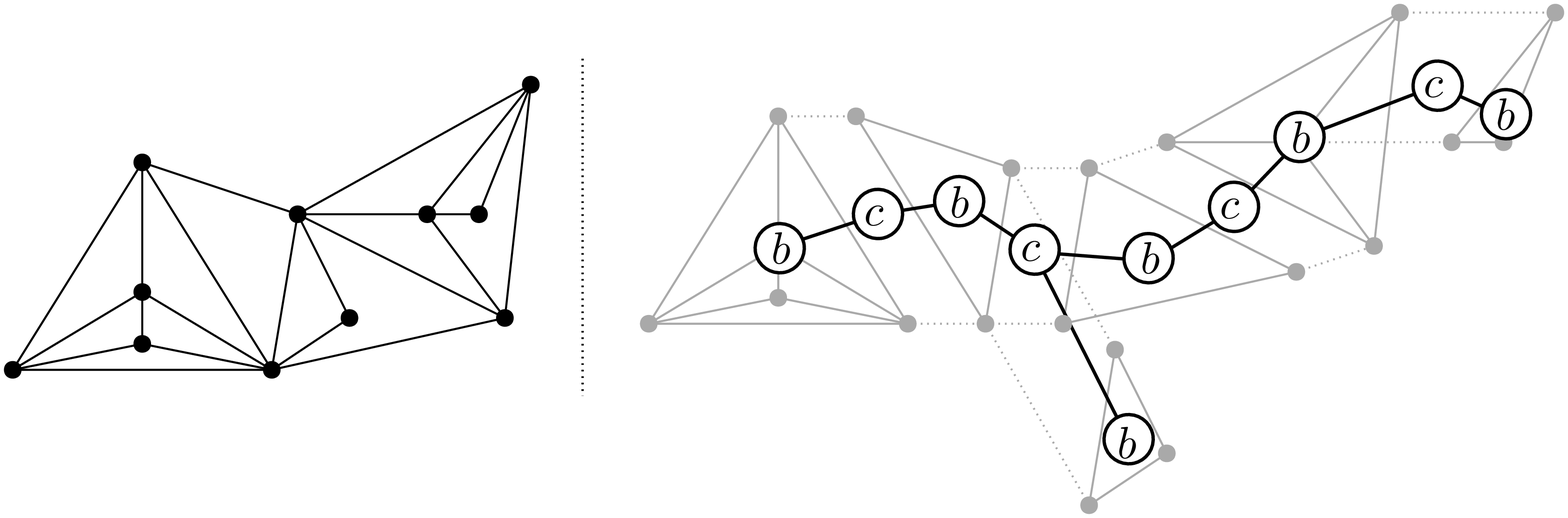}
    \caption{Tree-decomposition (right) associated to a $2$-connected chordal graph (left) of tree-width 3.
    The $b$-nodes of the tree represent the $(k+1)$-connected components of the graph, and $c$-nodes represent the $k$-cliques through which the $(k+1)$-connected components are glued together.}
    \label{fig:dissymmetry}
\end{figure}
Next, we make use of this decomposition to obtain, via the above Proposition, the generating function of unrooted chordal graphs of bounded tree-width.
\begin{lemma}\label{lem:combinatorial_integration}
Let $k\in[t]$.
Then the following equation holds:
\begin{equation}\label{eq:dissymetry}
    \begin{split}
        G_k(\bx) 
        & = G_{k+1}\big(x_1, \ldots, x_{k-1}, x_k G_k^{(k)}(\bx), x_{k+1}, \ldots, x_t\big) \\
        & \qquad + \frac{1}{k!} \prod_{j\in[k]} x_j^{\binom{k}{j}} G_k^{(k)}(\bx) \left( 1 - G_{k+1}^{(k)}\big(x_1, \ldots, x_{k-1}, x_k G_k^{(k)}(\bx), x_{k+1}, \ldots, x_t\big) \right).
    \end{split}
\end{equation}
\end{lemma}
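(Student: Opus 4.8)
The plan is to apply the Dissymmetry Theorem (Proposition~\ref{prop:species}) to the tree-decomposable class $\mathcal{G}_k$, where each graph $\Gamma\in\mathcal{G}_k$ (other than the complete graph on $k$ vertices, which must be accounted for separately) is encoded by the tree $\tau(\Gamma)$ with $b$-nodes for $(k+1)$-connected components and $c$-nodes for the gluing $k$-cliques. Since the tree $\tau(\Gamma)$ is bipartite with respect to the node types $b$ and $c$ (no edge joins two nodes of the same type), the second form of the theorem applies:
\begin{equation*}
    \mathcal{G}_k \sqcup (\mathcal{G}_k)_{\bullet-\bullet} \simeq (\mathcal{G}_k)_\bullet .
\end{equation*}
So it suffices to identify the generating functions of $(\mathcal{G}_k)_\bullet$ and $(\mathcal{G}_k)_{\bullet-\bullet}$ and rearrange. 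I would treat the complete graph $K_k$ as the degenerate tree with a single $c$-node (or handle it via the conventions already fixed in the paper), checking at the end that the boundary term is consistent.

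First I would compute $(\mathcal{G}_k)_\bullet$, the class rooted at a node of $\tau(\Gamma)$. A distinguished $b$-node is a $(k+1)$-connected component together with the rest of the structure hanging off its $k$-cliques; a distinguished $c$-node is a gluing $k$-clique with the rest hanging off it. Rooting at a $b$-node: the component is a graph in $\mathcal{G}_{k+1}$, and at each of its $k$-cliques we may glue a $k$-connected chordal graph, which is exactly the substitution $x_k \mapsto x_k G_k^{(k)}(\bx)$ already used in Lemma~\ref{lem:system}; this gives the term $G_{k+1}\big(x_1,\ldots,x_{k-1},x_k G_k^{(k)}(\bx),x_{k+1},\ldots,x_t\big)$. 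Rooting at a $c$-node (a distinguished ordered $k$-clique): around it one attaches a set of $(k+1)$-connected components each rooted at that clique, i.e.\ $\exp\big(G_{k+1}^{(k)}(\ldots)\big) = G_k^{(k)}(\bx)$ after the same substitution, and unrooting the ordering and restoring the discounted subcliques contributes the prefactor $\frac{1}{k!}\prod_{j\in[k]} x_j^{\binom{k}{j}}$. So the generating function of $(\mathcal{G}_k)_\bullet$ is
\begin{equation*}
    G_{k+1}\big(x_1,\ldots,x_{k-1},x_k G_k^{(k)}(\bx),x_{k+1},\ldots,x_t\big) + \frac{1}{k!}\prod_{j\in[k]} x_j^{\binom{k}{j}}\, G_k^{(k)}(\bx).
\end{equation*}

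Next I would compute $(\mathcal{G}_k)_{\bullet-\bullet}$, the class rooted at an edge of $\tau(\Gamma)$. Such an edge joins a $b$-node (a $(k+1)$-connected component) to an incident $c$-node (one of its $k$-cliques). This is precisely the data of a $(k+1)$-connected chordal graph rooted at a $k$-clique — contributing $G_{k+1}^{(k)}(\ldots)$ after the substitution — together with, on the $c$-node side, a set of \emph{further} $(k+1)$-connected components glued at that same clique (contributing another factor $G_k^{(k)}(\bx)$ via the exponential) and the unrooting prefactor $\frac{1}{k!}\prod_{j\in[k]} x_j^{\binom{k}{j}}$. Hence the generating function of $(\mathcal{G}_k)_{\bullet-\bullet}$ is
\begin{equation*}
    \frac{1}{k!}\prod_{j\in[k]} x_j^{\binom{k}{j}}\, G_k^{(k)}(\bx)\, G_{k+1}^{(k)}\big(x_1,\ldots,x_{k-1},x_k G_k^{(k)}(\bx),x_{k+1},\ldots,x_t\big).
\end{equation*}
Substituting these two expressions into $G_k(\bx) = [\,(\mathcal{G}_k)_\bullet\,] - [\,(\mathcal{G}_k)_{\bullet-\bullet}\,]$ and factoring $\frac{1}{k!}\prod_{j\in[k]} x_j^{\binom{k}{j}}\, G_k^{(k)}(\bx)$ out of the two clique-rooted terms yields exactly \eqref{eq:dissymetry}.

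The main obstacle I anticipate is bookkeeping the clique-discounting conventions consistently across the three rooted classes — in particular making sure the $x_j$-exponents from ``restoring'' the subcliques of a distinguished $k$-clique match in $(\mathcal{G}_k)_\bullet$ and $(\mathcal{G}_k)_{\bullet-\bullet}$, and that no $j$-clique for $j<k$ is double-counted when components share a gluing clique. A secondary subtlety is the degenerate case $\Gamma = K_k$ (and, relatedly, graphs whose tree $\tau(\Gamma)$ is a single $b$-node with no incident $c$-node), which must be checked to be correctly produced by the telescoping of the dissymmetry identity; this is where I would be most careful that the ``$1-$'' inside the last factor of \eqref{eq:dissymetry} has the right sign and constant term. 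Once the dictionary between tree-roots and the generating-function substitutions is pinned down, the rest is a direct application of Proposition~\ref{prop:species} and algebraic rearrangement.
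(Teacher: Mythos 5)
Your proposal is correct and takes essentially the same route as the paper: both apply the dissymmetry theorem to the tree with $b$-nodes for $(k+1)$-connected components and $c$-nodes for $k$-cliques, compute the node- and edge-rooted generating functions via the substitution $x_k \mapsto x_k G_k^{(k)}(\bx)$ together with the prefactor $\frac{1}{k!}\prod_{j\in[k]} x_j^{\binom{k}{j}}$, and rearrange. The only (harmless) difference is bookkeeping: you make every $k$-clique a $c$-node (with $K_k$ a lone $c$-node), so the paper's explicit corrections -- subtracting $1+G_{k+1}^{(k)}$ in its $c$-rooted series, the factor $G_k^{(k)}-1$ in its edge-rooted series, and the separately added $K_k$ term -- are absorbed into your full $G_k^{(k)}$ factors, and the algebra telescopes to the same identity.
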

\begin{proof}
Let $B_k$ and $C_k$ be the generating functions counting the trees $\tau(\Gamma)$ ($\Gamma\in\mathcal{G}_k$) rooted at nodes of type $b$ and $c$, respectively, and $E_k$ be the generating function of those trees rooted at an undirected edge between nodes of types $b$ and $c$. 
They can also be respectively seen as the generaring functions counting $k$-connected graphs with a distinguished $(k+1)$-connected component, a distinguished $k$-clique that belongs to more than one $(k+1)$-connected components, or a distinguished $(k+1)$-connected component $C$ together with a distinguished $k$-clique in $C$ that belongs to at least another $(k+1)$-connected component $C'$.
They are specified next using the symbolic method:
\begin{align*}
    B_k(\bx) &= G_{k+1}\big(x_1, \ldots, x_{k-1}, x_k G_k^{(k)}(\bx), x_{k+1}, \ldots, x_t\big), \\
    C_k(\bx) &= \frac{1}{k!} \prod_{j\in[k]} x_j^{\binom{k}{j}} 
        \left( G_k^{(k)}(\bx) - \left( 1 + G_{k+1}^{(k)}\big(x_1, \ldots, x_{k-1}, x_k G_k^{(k)}(\bx), x_{k+1}, \ldots, x_t\big) \right)\right), \\
    E_k(\bx) &= \frac{1}{k!} \prod_{j\in[k]} x_j^{\binom{k}{j}} G_{k+1}^{(k)}\big(x_1, \ldots, x_{k-1}, x_k G_k^{(k)}(\bx), x_{k+1}, \ldots, x_t\big)
        \left( G_k^{(k)}(\bx) - 1 \right).
\end{align*}
The equation defining $B_k(\bx)$ follows directly from the decomposition discussed in the previous section, while the equation for $C_k(\bx)$ is obtained from Equation \eqref{eq:recursive} by substracting the first two terms of the exponential (because there are at least two $(k+1)$-connected components glued through the $k$-clique).
The equation for $E_k(\bx)$ can be derived by considering a $(k+1)$-connected chordal graph $\Gamma$ rooted at a $k$-clique $K$ and gluing through it a $k$-connected chordal graph $\Gamma'$ rooted at $K$, and containing at least one $(k+1)$-connected component, then further gluing some $k$-connected chordal graph to other $k$-cliques of $\Gamma'$.
The correcting factors in the last two equations are there to mark all the subcliques of the root $k$-clique and forget the order of its vertices.

Finally, recall that we consider the complete graph on $k$ vertices to be $k$-connected.
So that Proposition~\ref{prop:species} directly implies that the unrooted graphs are counted by
\begin{align*}
    G_k(\bx) = \frac{1}{k!} \prod_{j\in[k]} x_j^{\binom{k}{j}} + B_k(\bx) + C_k(\bx) - E_k(\bx).
\end{align*}
By translating this equation in light of the above three equations, one concludes the proof.
\end{proof}
%


\section{Asymptotic analysis}\label{sec:analysis}

Fix $t\ge 1$.
In this section we prove Theorems \ref{thm:enumeration} and \ref{thm:limit_law}.
We use rather classical methods from \cite{FS09}, which consist in deriving asymptotic estimates from local expansions of the generating functions from Section \ref{sec:combi} at their singularities.
Those expansions are in turn derived from successive applications of the implicit system of equations described in Lemma \ref{lem:system}, to ``transfer'' the local expansion of $G_{t+1}(\bx)$ to $G_0(x_1,1,\ldots,1)$, as illustrated by the schema in Figure \ref{fig:schema}. 

We will follow the method developed in \cite[Chapter 2]{D09}, but will need to extend some of the tools and notions present there in order to deal with multivariate generating functions and the fact that the local expansions are with respect to different variables from one step to the next.
This is the purpose of the next section.

\subsection{Proper singularity functions and singular expansions}\label{subsec:definitions_singularity}

Let $U\subset\mathbb{C}$ be an open set and $\rho: U\to\mathbb{C}$ be an analytic function.
For $u\in U$ and $\delta,\eta > 0$, a \textit{$\Delta$-domain at $\rho(u)$} is a complex region, depicted in Figure \ref{fig:delta-domain},
\begin{figure}
    \centering
    \includegraphics[width=0.4\textwidth]{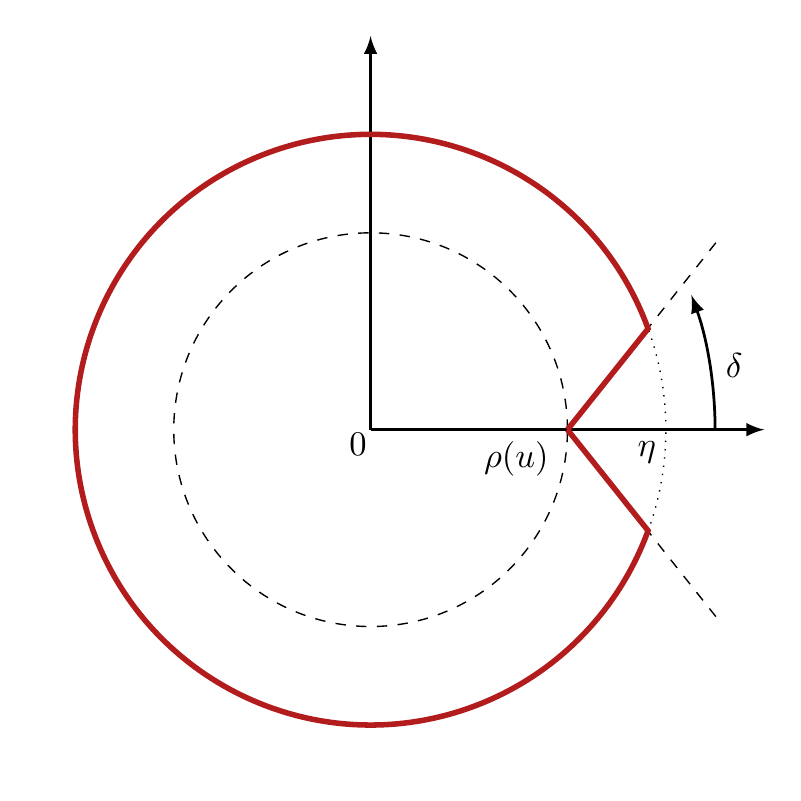}
    \caption{The open region containing 0 delimited by the closed red curve is the $\Delta$-domain $\Delta(\rho(u),\delta,\eta)$, for $\rho(u)\in\mathbb{R}_{>0}$.}
    \label{fig:delta-domain}
\end{figure}
of the form
\begin{align*}
    \Delta(\rho(u),\delta,\eta) = \Delta(\rho(u)) = \{ z\in\mathbb{C} : |z - \rho(u)| < \eta 
    \text{ and } |\arg(z/\rho(u) - 1)| > \delta \}.
\end{align*}
Our main tool is a ``transfer theorem''.
The proof can be found in \cite{D09} (see also \cite[Chapter VI.3]{FS09}).
\begin{proposition}{(Transfer Theorem \cite[Lemma 2.18]{D09}).}\label{prop:transfer}
Let $f(z,u)$ be a power series in $z$ and a parameter $u\in U$, and suppose that it admits an expansion of the form
\begin{align*}
    f(z,u) = C(u)\left( 1 - \frac{z}{\rho(u)} \right)^{-\alpha(u)}
    + O\left( \left( 1 - \frac{z}{\rho(u)} \right)^{-\beta(u)} \right),
\end{align*}
that is uniform for $u\in U$ and $z\in \Delta(\rho(u))$, and with functions $C(u)$, $\rho(u)$, $\alpha(u)$ and $\beta(u)$ that remain bounded and satisfy $\beta(u) < \Re(\alpha(u))$ for all $u\in U$.

Then the following estimate holds uniformly for $u\in U$ and as $n\to\infty$
\begin{align*}
    [z^n]f(z,u) = C(u) \frac{n^{\alpha(u) - 1}}{\Gamma(\alpha(u))} \rho(u)^{-n}
    + O\left( \rho(u)^{-n}\, n^{\max\left(\Re(\alpha(u)) - 2,\,\, \beta(u) - 1\right)} \right).
\end{align*}
\end{proposition}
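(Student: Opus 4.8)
The plan is to follow the standard singularity-analysis argument of Flajolet and Sedgewick \cite[Chapter VI]{FS09}, but carrying the parameter $u$ along and checking at every step that the implied constants are uniform on $U$. First I would split, by linearity of coefficient extraction, $f(z,u) = C(u)(1-z/\rho(u))^{-\alpha(u)} + R(z,u)$, where the remainder $R(z,u)$ is analytic in $\Delta(\rho(u),\delta,\eta)$ and satisfies $R(z,u) = O((1-z/\rho(u))^{-\beta(u)})$ there, uniformly in $u$. It then remains to establish two uniform statements: (a) $[z^n]\,C(u)(1-z/\rho(u))^{-\alpha(u)} = \tfrac{C(u)}{\Gamma(\alpha(u))}\,\rho(u)^{-n} n^{\alpha(u)-1}\bigl(1+O(n^{-1})\bigr)$, and (b) $[z^n] R(z,u) = O\bigl(\rho(u)^{-n} n^{\beta(u)-1}\bigr)$. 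Adding the two, and noting that the $O(n^{-1})$ correction in (a) produces an error of order $\rho(u)^{-n} n^{\Re(\alpha(u))-2}$, gives the stated estimate with error exponent $\max(\Re(\alpha(u))-2,\,\beta(u)-1)$.

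For (a), I would rescale: $[z^n]\,C(u)(1-z/\rho(u))^{-\alpha(u)} = C(u)\,\rho(u)^{-n}\,[w^n](1-w)^{-\alpha(u)} = C(u)\,\rho(u)^{-n}\binom{n+\alpha(u)-1}{n}$, and then write $\binom{n+\alpha(u)-1}{n} = \tfrac{1}{\Gamma(\alpha(u))}\cdot\tfrac{\Gamma(n+\alpha(u))}{\Gamma(n+1)}$. The asymptotic expansion of the ratio of Gamma functions gives $\tfrac{\Gamma(n+\alpha)}{\Gamma(n+1)} = n^{\alpha-1}(1+O(n^{-1}))$ uniformly for $\alpha$ in any bounded set; since $1/\Gamma$ is entire and $C(u),\alpha(u)$ are bounded on $U$, this is uniform in $u$.

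For (b), I would use Cauchy's formula $[z^n]R(z,u) = \tfrac{1}{2\pi i}\oint R(z,u)\,z^{-n-1}\,dz$ over the image under $z\mapsto \rho(u)w$ of a fixed truncated Hankel-type contour $\gamma_n$ at $w=1$: a circle of radius $1/n$ about $w=1$, two segments leaving it at angle $\pm\delta'$ for some fixed $\delta<\delta'$, closed off by an outer arc of radius $1+\eta'$ with $\eta'$ a small fixed constant, all chosen so that $\rho(u)\gamma_n$ lies inside $\Delta(\rho(u),\delta,\eta)$. A piecewise estimate — $O(n^{\beta(u)})$ over an arc of length $O(n^{-1})$ on the small circle; the substitution $w=1+se^{\pm i\delta'}/n$ together with $|1-w|^{-\beta}\le (s/n)^{-\beta}$ and $|w|^{-n-1}\le e^{-cs}$ giving a convergent integral of order $n^{\beta(u)-1}$ on the rays; an exponentially negligible contribution on the outer arc — yields $[z^n]R(z,u) = O(\rho(u)^{-n} n^{\beta(u)-1})$, with the constant uniform because the bound on $R$ is uniform, $\delta,\eta$ are fixed, and $|\rho(u)|$ is bounded above and away from $0$.

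The only genuine obstacle beyond bookkeeping is uniformity. Concretely, one must verify that the rescaled contour $\rho(u)\gamma_n$ sits inside $\Delta(\rho(u),\delta,\eta)$ for all $u\in U$ at once — which is exactly where the hypothesis that $\delta,\eta$ do not depend on $u$ is used — and that the hypotheses (uniform error bound, boundedness of $C,\alpha,\beta$, control of $|\rho|$) make every $O(\cdot)$ in the contour estimates independent of $u$. Once that is in place, the remaining computation is identical to the classical one in \cite[Chapter VI]{FS09}, with each absolute constant replaced by its supremum over $U$.
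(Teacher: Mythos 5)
Your proof is correct and follows essentially the same route as the source the paper relies on: the paper does not prove this proposition itself but cites \cite[Lemma 2.18]{D09} (see also \cite[Chapter VI.3]{FS09}), whose argument is exactly your split into the binomial main term plus a remainder estimated by a rescaled Hankel-type contour, with the uniformity in $u$ tracked through fixed $\delta,\eta$ and the boundedness of $C,\alpha,\beta,\rho$. One small remark: your appeal to $|\rho(u)|$ being bounded away from $0$ is not among the stated hypotheses, but it is also not needed, since after the rescaling $z=\rho(u)w$ all contour estimates take place in the $w$-plane and only the upper bound on $|\rho(u)|$ matters for fitting the fixed contour inside $\Delta(\rho(u),\delta,\eta)$.
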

\noindent By setting $u=1$ in Proposition \ref{prop:transfer}, one recovers the ``classical'' transfer theorem for univariate analytic functions, see for instance \cite[Lemma 2.15]{D09}.

Next we introduce several definitions which will allow us to extend the notion of local expansion of an analytic function at an \textit{algebraic singularity} to our multivariate setting.
First is the notion of \textit{fully movable proper singularity function}.
\begin{definition}\label{def:sing}
We say that a function $\rho(x_2,\ldots, x_t)$ is a \textbf{proper singularity function} if it satisfies the following conditions:
\begin{itemize}
    \item[(i)] It is defined in a $(t-1)$-dimensional proper complex neighbourhood of $\mathbb{R}_+^{t-1}$, where it is also analytic. 

    \item[(ii)] It is positive and real if $x_2,\ldots,x_t$ are positive and real, and it is strictly decreasing with negative derivatives in all $t-1$ positive real variables.
\end{itemize}

Furthermore we say it is \textbf{fully movable} with respect to the variables $x_2,\ldots,x_k$ if the following condition holds:
\begin{itemize}
    \item[(iii)] $\rho(x_2,\ldots, x_t) \to 0$ (resp. $\infty$) if one of the variables $x_2,\ldots,x_k$ tends to $\infty$ (resp. $0$), whereas all the other variables including $x_{k+1},\ldots, x_t$ are fixed positive real numbers.
\end{itemize}
\end{definition} 

With this notion at hand, we can define that of a \textit{positive function with a proper $\alpha$-singularity}.

\begin{definition}\label{def:alpha_sing}
Let $\alpha\in\mathbb{R}\setminus\mathbb{Z}$.
We say that a function $G(\bx)$ is a \textbf{positive function with a proper $\alpha$-singularity} if the following properties hold:
\begin{itemize}
    \item[(i)] $G(\bx)$ is a power series in $\bx = (x_1,\ldots,x_t)$ with non-negative coefficents.

    \item[(ii)] There exists a proper singularity function $\rho(x_2,\ldots,x_t)$ such that for every fixed choice of $x_2,\ldots, x_t\in\mathbb{R}_+$, $\rho(x_2,\ldots,x_t)$ is the radius of convergence of the power series $x_1 \mapsto G(\bx)$.

    \item[(iii)] For every choice of $X_0,X_1\in\mathbb{R}$ with $0 < X_0 < X_1$, there exist $\delta > 0$ and analytic functions $g_1(\bx)$, $g_2(\bx)$, that are defined and non-zero for $X_0 < |x_2|, \ldots, |x_t| < X_1$ and $|x_1 - \rho(x_2,\ldots,x_t)| < \delta$  with $x_1,\ldots,x_t$ sufficiently close to the positive real axis, such that in this range, provided that $\arg(x_1-\rho(x_2,\ldots,x_t)) \ne 0$, we have
    \begin{equation}\label{eq:Gbx_rep_def}
        G(\bx) = g_1(\bx) + g_2(\bx) \left( 1 - \frac{x_1}{\rho(x_2,\ldots,x_t)} \right)^\alpha.
    \end{equation}
    In this case we say that $x_1$ is the \textbf{leading variable} of $G(\bx)$.
\end{itemize}
\end{definition}

Finally, in order to apply Proposition \ref{prop:transfer} to a positive function with a proper $\alpha$-singularity, we need some notion of analytic continuation to a $\Delta$-domain.

\begin{definition}\label{def:aperiodic}
A positive function $G(\bx)$ with a proper $\alpha$-singularity ($\alpha\in\mathbb{R}\setminus\mathbb{Z}$) and proper singularity function $\rho(x_2,\ldots,x_t)$ is said to be \textbf{aperiodic and analytically continuable with respect to the variable $x_1$}  if the following holds:
\begin{itemize}
    \item[(i)] For every fixed choice of $x_2, \ldots, x_t\in\mathbb{R}_+$, $\rho(x_2,\ldots,x_t)$ is the unique singularity of the function $x_1\mapsto G(\bx)$ on the circle $|x_1| = \rho(x_2,\ldots,x_t)$. 

    \item[(ii)] There exists $\delta > 0$ such that $x_1\mapsto G(\bx)$ can be analytically continued to the region
    \begin{equation}\label{eq:region_analytic_continuation}
        |x_1| < |\rho(x_2,\ldots, x_t)| + \delta/2
        \quad\text{and}\quad 
        |x_1- \rho(x_2,\ldots,x_t)| > \delta.
    \end{equation}
\end{itemize}
In particular this function cannot be represented as a function of the form $x_1^a f(x_1^b)$ for some positive integers $a,b$, where $b> 1$.
\end{definition}
Fix $k\in \{2,\ldots,t\}$ and observe that setting $x_i = 1$ for $i\ne k$ in Definition~\ref{def:aperiodic}\textit{(ii)} implies that $G(x_1,1,\ldots,1,x_k,1,\ldots,1)$ can be analytically continued to a domain of the form $\Delta(x_k)$.

\subsection{Transfer properties of proper singular expansions}\label{subsec:tansfer_propeties}

We now prove certain ``transfer properties'' of proper $\alpha$-singular expansions in the neighbourhood of a proper singularity function.
Our main tools here will be the \textit{Implicit Function Theorem} (IFT) for analytic functions, and its refinement known as the \textit{Weierstrass Preparation Theorem} (WPT).
For a statement and a proof of those famous theorems, we refer the reader to \cite{KK83}.

The first property generalises \cite[Lemma 2.28]{D09} to proper singularity functions.
The proof follows the same line and we only sketch it here.
\begin{lemma}\label{lem:singularity_change}
Let $k\in\{2, \ldots, t-1\}$ and suppose that $\rho(x_2,x_3,\ldots, x_t)$ is a proper singularity function that is fully movable with respect to the  variables $x_2,\ldots, x_k$.
Then there exists a proper singularity function $\kappa(x_1,\ldots,x_{k-1},x_{k+1},\ldots, x_t)$ 
that is fully movable with respect to $x_1,\ldots,x_{k-1}$ such that
\begin{equation}\label{eq:rhokappa}
    x_1 = \rho(x_2,\ldots,x_{k-1}, \kappa(x_1,\ldots,x_{k-1},x_{k+1},\ldots, x_t), x_{k+1},\ldots, x_t).
\end{equation}

Furthermore there exists a function $K(\bx)$ that is analytic and non-zero on a $t$-dimensional complex neighbourhood of
$\mathbb{R}_+^{t}$ such that
\begin{equation}\label{eq:rhokappa2}
    x_1 - \rho(x_2,\ldots,x_t) = K(\bx)\left( x_k - \kappa(x_1,\ldots,x_{k-1},x_{k+1},\ldots, x_t) \right).
\end{equation}
\end{lemma}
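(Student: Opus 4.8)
The plan is to view equation \eqref{eq:rhokappa} as a defining equation for $\kappa$ obtained by ``inverting'' the relation $x_1 = \rho(x_2,\ldots,x_t)$ with respect to the single variable $x_k$, and then to extract the factorisation \eqref{eq:rhokappa2} via the Weierstrass Preparation Theorem. Concretely, fix positive real values $x_1,\ldots,x_{k-1},x_{k+1},\ldots,x_t$ and consider the map $x_k\mapsto \rho(x_2,\ldots,x_{k-1},x_k,x_{k+1},\ldots,x_t)$. By Definition~\ref{def:sing}(ii) this map is strictly decreasing in $x_k$ with negative derivative, hence locally invertible, and by the full-movability hypothesis (iii) it sweeps the whole interval $(0,\infty)$ as $x_k$ ranges over $(0,\infty)$. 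Therefore for any positive real $x_1$ there is a unique positive real solution $x_k=\kappa(x_1,\ldots,x_{k-1},x_{k+1},\ldots,x_t)$ of \eqref{eq:rhokappa}. The analytic IFT, applied at a point of $\mathbb{R}_+^{t-1}$ where $\partial\rho/\partial x_k\neq 0$ (which holds everywhere on the positive reals by (ii)), promotes this to an analytic function $\kappa$ on a complex neighbourhood of $\mathbb{R}_+^{t-1}$.

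Next I would check that $\kappa$ is itself a proper singularity function that is fully movable in $x_1,\ldots,x_{k-1}$. Analyticity and positivity on the reals are already in hand. For the monotonicity in each variable one differentiates \eqref{eq:rhokappa} implicitly: writing $\rho_j$ for $\partial\rho/\partial x_j$, one gets $1 = \rho_k\cdot\partial\kappa/\partial x_1$, so $\partial\kappa/\partial x_1 = 1/\rho_k < 0$ since $\rho_k<0$; and for $j\in\{2,\ldots,k-1,k+1,\ldots,t\}$, $0 = \rho_j + \rho_k\cdot\partial\kappa/\partial x_j$, giving $\partial\kappa/\partial x_j = -\rho_j/\rho_k < 0$ because $\rho_j<0$ and $\rho_k<0$. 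Hence $\kappa$ is strictly decreasing with negative partials in all its $t-1$ variables. Full movability of $\kappa$ with respect to $x_1,\ldots,x_{k-1}$ follows by a boundary/limit argument: if $x_1\to\infty$ with the other variables fixed, then since $\rho(x_2,\ldots,x_{k-1},\cdot,x_{k+1},\ldots,x_t)$ is a decreasing bijection of $(0,\infty)$ onto $(0,\infty)$, its inverse value $\kappa$ must tend to $0$; symmetrically $x_1\to 0$ forces $\kappa\to\infty$; and if one of $x_2,\ldots,x_{k-1}$ tends to $\infty$ then $\rho$ (being fully movable in those variables) forces the corresponding preimage $\kappa$ to $\infty$, and vice versa. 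This gives Definition~\ref{def:sing}(iii) for $\kappa$ relative to $x_1,\ldots,x_{k-1}$.

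For the factorisation \eqref{eq:rhokappa2}, I would apply the Weierstrass Preparation Theorem to the analytic function $F(\bx) := x_1 - \rho(x_2,\ldots,x_t)$ regarded as a function of the distinguished variable $x_k$ near a reference point $\bx^0\in\mathbb{R}_+^t$ lying on the variety $\{F=0\}$, i.e.\ with $x_k^0 = \kappa(x_1^0,\ldots)$. Since $\partial F/\partial x_k = -\rho_k \neq 0$ at $\bx^0$, the function $F$ is \emph{regular of order $1$} in $x_k$ at $\bx^0$, so the WPT yields $F(\bx) = K(\bx)\cdot\bigl(x_k - a(x_1,\ldots,x_{k-1},x_{k+1},\ldots,x_t)\bigr)$ with $K$ analytic and non-vanishing near $\bx^0$ and $a$ analytic; evaluating on the zero set identifies $a$ with $\kappa$ by uniqueness. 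One then patches these local factorisations together over a complex neighbourhood of all of $\mathbb{R}_+^t$: the factor $x_k-\kappa$ is globally defined there, so $K(\bx) = F(\bx)/(x_k-\kappa(\ldots))$ is a well-defined analytic function away from the zero set, and it extends analytically across it (being locally a unit from each WPT application), and it is non-zero throughout by the same local statements. The main obstacle is the bookkeeping in this last globalisation step — ensuring the locally-defined Weierstrass units agree and assemble into a single analytic non-vanishing $K$ on a full complex neighbourhood of $\mathbb{R}_+^t$, rather than merely near one point — together with being careful that all the neighbourhoods (of $\rho$, of $\kappa$, and the $\delta$-strips around the positive reals) can be chosen consistently; this is exactly the kind of detail the authors of \cite[Lemma 2.28]{D09} handle and which is why they say the proof ``follows the same line''.
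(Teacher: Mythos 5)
Your overall route is the same as the paper's: existence and monotonicity of $\kappa$ on the positive reals from Definition~\ref{def:sing}(ii)--(iii), analyticity via the implicit function theorem, the factorisation \eqref{eq:rhokappa2} from the Weierstrass Preparation Theorem in the degree-one case, and a final globalisation to a complex neighbourhood of $\mathbb{R}_+^t$. One step, however, is stated incorrectly: in your verification of full movability you claim that if one of $x_2,\ldots,x_{k-1}$ tends to $\infty$ then $\kappa\to\infty$ (``and vice versa''). This is backwards, and it contradicts the inequality $\partial\kappa/\partial x_j=-\rho_j/\rho_k<0$ that you derived two sentences earlier; moreover, if it were true it would show that $\kappa$ is \emph{not} fully movable in those variables, since Definition~\ref{def:sing}(iii) requires $\kappa\to 0$ as a movable variable tends to $\infty$ and $\kappa\to\infty$ as it tends to $0$.

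The correct statement, and a quick argument for it, is this: fix $x_1$ and all variables except $x_j$ with $2\le j\le k-1$, and let $x_j\to\infty$. Since $\kappa$ is decreasing in $x_j$, it has a limit $c\ge 0$; if $c>0$, then monotonicity of $\rho$ in its $k$-th argument gives $x_1=\rho(\ldots,x_j,\ldots,\kappa,\ldots)\le\rho(\ldots,x_j,\ldots,c,\ldots)\to 0$ by full movability of $\rho$ in $x_j$, contradicting $x_1>0$; hence $\kappa\to 0$. Symmetrically, as $x_j\to 0$, if $\kappa$ stayed bounded above by $C$ one would get $x_1\ge\rho(\ldots,x_j,\ldots,C,\ldots)\to\infty$, a contradiction, so $\kappa\to\infty$. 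With this clause corrected (your treatment of the variable $x_1$ is already in the right direction), the remainder of your argument, including the WPT factorisation and the patching of the local units into a single non-vanishing $K$ on a sufficiently thin neighbourhood of $\mathbb{R}_+^t$, matches the paper's proof.
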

\begin{proof}
Suppose first that $x_1,\ldots, x_t$ are positive real variables.
Since $\rho$ is strictly decreasing and tends to $0$ (resp. $\infty$), if one of the variables $x_2,\ldots, x_k$ tends to $\infty$ (resp. $0$) then it immediately follows from the continuity of $\rho$ that a function $\kappa = \kappa(x_1,\ldots,x_{k-1},x_{k+1},\ldots, x_t)$ satisfying \eqref{eq:rhokappa} exists. 
Furthermore, $\kappa$ is strictly decreasing and tends to $0$ (resp. $\infty$) if one of the variables $x_1,\ldots,x_{k-1}$ tends to $\infty$ (resp. $0$). 

Next, fix $x_2, \ldots, x_t \in\mathbb{R}_+$ and set $x_{1} = \rho(x_{2},\ldots, x_{t})$. 
Since from Definition \ref{def:sing} $\rho$ is analytic and satisfies $\frac{\partial}{\partial x_k} \rho < 0$ for $2\le k\le t$, it follows, by applying the IFT to \eqref{eq:rhokappa}, that the function $\kappa$ can be (uniquely) analytically continued to a complex neighbourhood of $(x_{1},\ldots, x_{k-1}, x_{k+1},\ldots, x_{t})$.
In fact, using the WPT in the degree one case, it can further be shown that there exists a function $K(\bx)$ that is analytic and non-zero in a complex neighbourhood of $\bx$ such that \eqref{eq:rhokappa2} holds. 

Finally, a standard analytic continuation argument shows that both $\kappa$ and $K$ can be {\it globally} defined so that \eqref{eq:rhokappa2} holds in the proposed range, that is, a complex neighbourhood of $\mathbb{R}_+^{t}$.
\end{proof}

An important consequence of Lemma~\ref{lem:singularity_change} is that for $k\in [t]$ the representation \eqref{eq:Gbx_rep_def} can be rewritten into
\begin{equation*}
    G(\bx) = g_1(\bx) + \overline g_2(\bx) \left( 1 - \frac {x_k}{\kappa} \right)^\alpha,
\end{equation*}
with $\kappa = \kappa(x_1,\ldots,x_{k-1},x_{k+1},\ldots, x_t)$ and where the analytic function
\begin{equation*}
    \overline g_2(\bx) = g_2(\bx) \left( \frac{K(\bx)\kappa}{\rho(x_2,\ldots,x_t)} \right)^{\alpha}
\end{equation*}
is defined and non-zero for $X_0 < |x_2|, \ldots, |x_t| < X_1$.
This means that any of the variables $x_1,\ldots, x_k$ can be the leading one in the definition of a positive function with a proper $\alpha$-singularity, provided that the proper singularity function $\rho(x_2,\ldots, x_t)$ is fully movable with respect to $x_2,\ldots, x_k$.
Furthermore $\kappa$ is certainly a singularity of the mapping $x_k \mapsto G(\bx)$. 
And by the monotonicity property in Definition~\ref{def:sing}\textit{(ii)} there is no smaller positive real singularity. 
Thus, $\kappa$ is the radius of convergence of the mapping $x_k \mapsto G(\bx)$, provided that $x_1,\ldots,x_{k-1},x_{k+1},\ldots, x_t\in\mathbb{R}_+$.

Next, we extend \cite[Lemma~2.27]{D09} to the context of positive functions with a proper $\alpha$-singularity.
\begin{lemma}\label{lem:diff_int}
For $k \in [t]$ and $\alpha\in\mathbb{R}\setminus\mathbb{Z}$, let $G(\bx)$ be a positive function with a proper $\alpha$-singularity, aperiodic and analytically continuable with respect to $x_1$,  and with a proper singularity function $\rho(x_2,\ldots, x_t)$ that is fully movable with respect to $x_2,\ldots,x_k$. 
Set
\begin{equation}\label{eq:H_integral}
    D(\bx) = \frac{\partial}{\partial x_k} G(\bx)
    \quad\text{and}\quad
    H(\bx) = \int_0^{x_k} G(x_1,\ldots,x_{k-1},y,x_{k+1}, \ldots,x_t)\, dy.
\end{equation}
Then $D(\bx)$ is a positive function with a proper $(\alpha-1)$-singularity while $H(\bx)$ is a positive function with a proper $(\alpha+1)$-singularity, and both are aperiodic and analytically continuable with respect to $x_1$.
Furthermore the proper singularity functions of $G(\bx)$, $D(\bx)$ and $H(\bx)$ coincide.
\end{lemma}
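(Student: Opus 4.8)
The plan is to treat $D(\bx)$ and $H(\bx)$ separately, but in both cases the strategy is the same: take the representation \eqref{eq:Gbx_rep_def} for $G(\bx)$ guaranteed by Definition~\ref{def:alpha_sing}, and show that differentiating or integrating with respect to $x_k$ preserves the shape of the expansion while shifting the exponent by $\mp 1$. By the discussion following Lemma~\ref{lem:singularity_change}, since $\rho(x_2,\ldots,x_t)$ is fully movable with respect to $x_2,\ldots,x_k$, we may take $x_k$ to be the leading variable and write
\begin{equation*}
    G(\bx) = g_1(\bx) + \overline g_2(\bx)\left( 1 - \frac{x_k}{\kappa} \right)^\alpha,
\end{equation*}
with $\kappa = \kappa(x_1,\ldots,x_{k-1},x_{k+1},\ldots,x_t)$ a proper singularity function (fully movable with respect to $x_1,\ldots,x_{k-1}$), and $g_1, \overline g_2$ analytic and $\overline g_2$ non-zero near the relevant polydisc. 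The point is that $\kappa$ does not depend on $x_k$, so differentiation and integration in $x_k$ act cleanly on the singular factor.

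For $D(\bx) = \partial G/\partial x_k$, I would differentiate the above representation. The main term produces $-\frac{\overline g_2(\bx)}{\kappa}\,\alpha\left(1 - \frac{x_k}{\kappa}\right)^{\alpha - 1}$, which is exactly of the required form with exponent $\alpha - 1 \in \mathbb{R}\setminus\mathbb{Z}$ and new analytic prefactor $-\alpha\,\overline g_2(\bx)/\kappa$ (still non-zero, since $\alpha\neq 0$ and $\kappa\neq 0$); the term $\partial_{x_k} g_1(\bx)$ and the $\partial_{x_k}\overline g_2(\bx)\cdot(1-x_k/\kappa)^\alpha$ contribution are absorbed into the new $g_1$ (the latter is analytic of higher order since $\alpha > \alpha - 1$ — strictly speaking one writes $(1-x_k/\kappa)^\alpha = (1-x_k/\kappa)\cdot(1-x_k/\kappa)^{\alpha-1}$ and folds the analytic factor into $g_1$ plus a correction to the new $\overline g_2$). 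Converting back to $x_1$ as the leading variable via Lemma~\ref{lem:singularity_change} shows $D(\bx)$ has a proper $(\alpha-1)$-singularity with the same proper singularity function $\rho$. Non-negativity of coefficients of $D(\bx)$ is immediate since $D = \partial_{x_k}G$ and $G$ has non-negative coefficients (differentiation in a marking variable keeps coefficients non-negative). For $H(\bx) = \int_0^{x_k} G\, dy$, I integrate the representation termwise: $\int_0^{x_k}(1 - y/\kappa)^\alpha dy = \frac{\kappa}{\alpha+1}\big(1 - (1-x_k/\kappa)^{\alpha+1}\big)$, giving a singular part $-\frac{\kappa\,\overline g_2}{\alpha+1}(1-x_k/\kappa)^{\alpha+1}$ (up to the usual care that $\overline g_2$ and $\kappa$ depend on the other variables but not $y$, and that integrating the analytic part $g_1$ and the lower-order corrections of $\overline g_2$ against $y$ stays analytic) — again of the desired form with exponent $\alpha+1\notin\mathbb{Z}$. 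Non-negativity of the coefficients of $H$ is clear as integration in $x_k$ divides non-negative coefficients by positive integers.

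It remains to check conditions \textit{(ii)} of Definition~\ref{def:alpha_sing} (that $\rho$ remains the radius of convergence in $x_1$) and the aperiodicity/analytic-continuability of Definition~\ref{def:aperiodic}. For the radius of convergence: the singular expansions just derived, together with the monotonicity in Definition~\ref{def:sing}\textit{(ii)} and the fact that $\rho$ (unchanged) is already a proper singularity function, force $\rho(x_2,\ldots,x_t)$ to be the dominant positive real singularity of $x_1\mapsto D(\bx)$ and $x_1\mapsto H(\bx)$; here one uses that $D$ and $H$, having non-negative coefficients, have their radius of convergence on the positive real axis (Pringsheim), and that there is no singularity strictly inside $|x_1| = \rho$ because $G$ itself has none there. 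For aperiodicity and analytic continuation to the region \eqref{eq:region_analytic_continuation}: since $\rho$ is the unique singularity of $x_1\mapsto G(\bx)$ on its circle of convergence and $G$ extends analytically to \eqref{eq:region_analytic_continuation}, both $\partial_{x_k}G$ and $\int_0^{x_k}G\,dy$ inherit that analytic continuation (differentiation preserves it directly; for the integral, one integrates along a path in $y$ staying in the continued region, using that the continuation in $x_1$ is locally uniform in the other variables), and uniqueness of the singularity on the circle is preserved for the same reason, so neither can have the forbidden form $x_1^a f(x_1^b)$ with $b>1$.

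The main obstacle I anticipate is bookkeeping rather than conceptual: making sure that when one folds analytic factors and lower-order singular corrections into the new $g_1$ and $\overline g_2$, the resulting $\overline g_2$ is genuinely non-zero on the required polyannulus $X_0 < |x_2|,\ldots,|x_t| < X_1$, and that for the integral case the constant term $\int_0^{x_k}g_1\,dy$ together with $\frac{\kappa\overline g_2}{\alpha+1}$ really does land in the class of admissible analytic prefactors (in particular that the "$1$" coming from $1 - (1-x_k/\kappa)^{\alpha+1}$ is correctly absorbed into $g_1$). A secondary subtlety is that the definition of a positive function with a proper $\alpha$-singularity designates $x_1$ as the leading variable, so one must pass through Lemma~\ref{lem:singularity_change} (and the displayed change-of-leading-variable formula following it) at both ends — once to make $x_k$ leading before differentiating/integrating, and once to return to $x_1$ — and verify that these conversions do not disturb aperiodicity or the non-vanishing of the prefactor. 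Since all the heavy lifting is already packaged in Lemma~\ref{lem:singularity_change}, this should go through without essential difficulty.
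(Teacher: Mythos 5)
Your overall route is the same as the paper's: use Lemma~\ref{lem:singularity_change} to make $x_k$ the leading variable with singularity function $\kappa=\kappa(x_1,\ldots,x_{k-1},x_{k+1},\ldots,x_t)$ independent of $x_k$, differentiate respectively integrate the local representation, and then switch back to $x_1$ via Lemma~\ref{lem:singularity_change}; the treatment of $D(\bx)$, of aperiodicity and of the analytic continuation to a region of the form \eqref{eq:region_analytic_continuation} is essentially what the paper does, and your non-vanishing argument for the new singular prefactors (using $\alpha\notin\mathbb{Z}$, and for $H$ the factor $-\kappa\,\overline g_2/(\alpha+1)$) matches the paper's.

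There is, however, one step in your handling of $H(\bx)$ that does not work as written. The representation $G=g_1+\overline g_2\,(1-x_k/\kappa)^{\alpha}$ is only a \emph{local} statement: $g_1$ and $\overline g_2$ are defined (and the identity holds) only for $|x_k-\kappa|<\delta'$, so you cannot integrate it ``termwise'' over the whole path from $0$ to $x_k$ --- the quantities $\int_0^{x_k}g_1\,dy$ and $\int_0^{x_k}(1-y/\kappa)^{\alpha}\,dy$ that you write down are not meaningful decompositions of $H$, since near $y=0$ the integrand is not given by that formula (and $g_1,\overline g_2$ need not even be defined there). The paper resolves exactly this point by splitting the integral at $(1-\eta)\kappa$ and at $\kappa$: the piece $\int_0^{(1-\eta)\kappa}$ is analytic in all variables because $G$ itself is analytic along that path, the piece $\int_{(1-\eta)\kappa}^{\kappa}$ is evaluated by a convergent Puiseux-type expansion of $G$ at $x_k\sim\kappa$ and is analytic in the remaining variables, and only $\int_{\kappa}^{x_k}$ produces the singular term $(1-x_k/\kappa)^{\alpha+1}$; afterwards the positivity of the new analytic part $\overline h_1$ (needed for Definition~\ref{def:alpha_sing}) is obtained from the non-negativity of the coefficients of $H$. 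So your argument needs this path-splitting (or an equivalent device) inserted; with it, the rest of your proposal goes through and coincides with the paper's proof.
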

\begin{proof}
Fix $\delta > 0$.
First, the analytic continuability of both mappings $x_1 \mapsto D(\bx)$ and $x_1 \mapsto H(\bx)$ to 
a region of the form \eqref{eq:region_analytic_continuation} is immediate by assumption on $G(\bx)$ and properties of the derivative and the integral.
Second, if $|x_1- \rho(x_2,\ldots,x_t)| < \delta$ then Lemma~\ref{lem:singularity_change} implies that there exist $\delta' > 0$ and a proper singularity function $\kappa = \kappa(x_1,\ldots,x_{k-1},x_{k+1},\ldots, x_t)$ such that for $|x_k - \kappa| < \delta'$, $G(\bx)$ can be represented as
\begin{equation}\label{eq:representation_G_kappa}
    G(\bx) = g_1(\bx) + \overline g_2(\bx) \left( 1 - \frac {x_k}{\kappa} \right)^{\alpha}.
\end{equation}

Set $d_1(\bx) = (\partial/\partial x_k) g_1(\bx)$ and $d_2(\bx) = \overline g_2(\bx)/(2\kappa) + (\partial/\partial x_k) \overline g_2(\bx) \left( 1 - x_k/\kappa \right)$.
Then taking the partial derivative of \eqref{eq:representation_G_kappa} with respect to $x_k$ gives
\begin{align*}
    D(\bx) 
    = \frac{\partial}{\partial x_k} g_1(\bx) + \frac{\partial}{\partial x_k} \overline g_2(\bx) \left( 1 - \frac{x_k}{\kappa} \right)^{\alpha} + \frac{\overline g_2(\bx)}{2\kappa} \left( 1 - \frac{x_k}{\kappa} \right)^{\alpha - 1}
    = d_1(\bx) + d_2(\bx) \left( 1 - \frac{x_k}{\kappa} \right)^{\alpha - 1}.
\end{align*}

Now, in order to compute the integral of \eqref{eq:representation_G_kappa} we first compute the Taylor expansions of the functions $g_1(\bx)$ and $\overline g_2(\bx)$ at $x_k\sim \kappa$ and obtain a representation of the form
\begin{equation}\label{eq:G_series}
    G(\bx) = \sum_{\ell \ge 0} G_\ell(x_1,\ldots,x_{k-1},x_{k+1},\ldots, x_t)\left(1 - \frac{x_k}{\kappa}\right)^{\alpha\ell}
\end{equation}
that is certainly convergent for $|x_k - \kappa| < \delta'$. Next we split up the 
integral in \eqref{eq:H_integral} into three parts
\begin{align*}
    I_1(x_1, \dots,x_{k-1},x_{k+1},\dots,x_t) &:= \int_0^{(1-\eta) \kappa} G(x_1,\ldots,x_{k-1},y,x_{k+1}, \ldots,x_t)\, dy, \\
    I_2(x_1, \dots,x_{k-1},x_{k+1},\dots,x_t) &:= \int_{(1-\eta) \kappa}^{\kappa} G(x_1,\ldots,x_{k-1},y,x_{k+1}, \ldots,x_t)\, dy, \\
    I_3(\bx) &:= \int_{\kappa}^{x_k} G(x_1,\ldots,x_{k-1},y,x_{k+1}, \ldots,x_t)\, dy,
\end{align*}
where $\eta>0$ is chosen in such a way that $\eta  < \delta'/ |\kappa|$. 
The first integral is certainly an analytic function in $x_1,\ldots,x_{k-1},x_k,\ldots, x_t$, as a definite integral with respect to $y$ in a range where $G$ is analytic. 
The second integral can be directly computed by the series expansion \eqref{eq:G_series}
\begin{align*}
    I_2(x_1,\dots,x_{k-1},x_{k+1},\dots,x_t) 
    & = \int\limits_{(1 - \eta)\kappa}^{\kappa} G(x_1,\ldots,x_{k-1},y,x_{k+1},\ldots,x_t)\, dy \\
    & = \sum_{\ell\ge 0} G_\ell(x_1,\ldots,x_{k-1},x_{k+1},\ldots,x_t) 
    \int\limits_{(1 - \eta) \kappa}^{\kappa}(1 - y/\kappa)^{\alpha\ell}\, dy  \\
    & = \kappa \sum_{\ell\ge 0} \frac{G_\ell(x_1,\ldots,x_{k-1},x_{k+1},\ldots,x_t)}{\alpha\ell + 1} \eta^{\alpha\ell + 1}.
\end{align*}
This series is absolutely convergent and represents an analytic function in $x_1,\ldots,x_{k-1}$, $x_{k+1},\ldots,x_t$. 
Finally, for the third integral we use again the series expansion (\ref{eq:G_series})
and obtain
\begin{align*}
    I_3(\bx) 
    & = \int_{\kappa}^{x_k} G(x_1,\ldots,x_{k-1},y,x_{k+1},\ldots,x_t)\, dy \\
    & = \sum_{\ell\ge 0} G_\ell(x_1,\ldots,x_{k-1},x_{k+1},\ldots,x_t) 
    \int\limits_{\kappa}^{x_k}(1 - y/\kappa)^{\alpha\ell}\, dy \\
    & = - \kappa \sum_{\ell\ge 0} 
    \frac{G_\ell(x_1,\ldots,x_{k-1},x_{k+1},\ldots,x_t)}{\alpha\ell + 1}(1 - x_k/\kappa)^{\alpha\ell + 1}.
\end{align*}
This series representation can be rewritten into
\begin{align*}
    I_3(\bx) = h_1(\bx) + h_2(\bx) \left( 1 - \frac{x_k}{\kappa} \right)^{\alpha + 1}.
\end{align*}
Next, note that since
\begin{align*}
    \overline g_2(x_1,\ldots,x_{k-1},\kappa,x_{k+1},\ldots,x_t) 
    = -\frac{\alpha + 1}{\kappa} h_2(x_1,\ldots,x_{k-1},\kappa,x_{k+1},\ldots,x_t),
\end{align*}
then both $\overline g_2$ and $h_2$ are non-zero, even if $|x_k - \kappa| < \delta''$ for a sufficiently small $\delta'' > 0$.
Moreover, since the coefficients of $G(\bx)$ and $H(\bx)$ are non-negative we have $g_1(\bx) > 0$ for $x_j\in\mathbb{R}_+$ and $\overline h_1(\bx) = h_1(\bx) + I_1(x_1,\dots,x_{k-1},x_{k+1},\dots,x_t) + I_2(x_1,\dots,x_{k-1},x_{k+1},\dots,x_t) > 0$.

Finally, by another application of Lemma~\ref{lem:singularity_change}, we can rewrite $D(\bx)$ and $H(\bx)$ into 
\begin{align*}
    D(\bx) = d_1(\bx) + \overline d_2(\bx) \left( 1 - \frac{x_{1}}{\rho(x_2,\ldots,x_t)} \right)^{\alpha - 1}
    \text{and }
    H(\bx) = \overline h_1(\bx) + \overline h_2(\bx) \left( 1 - \frac{x_{1}}{\rho(x_2,\ldots,x_t)} \right)^{\alpha + 1}
\end{align*}
with $\overline d_2,\overline h_2\ne 0$. 
This completes the proof.
\end{proof}

Finally, we generalise \cite[Theorem~2.21]{D09}.
This theorem states that if $G(x,u)$ is a univariate function, with parameter $u=(x_2,\ldots,x_t)$, defined implicitely in terms of another function $F(x,u,y)$ (i.e. such that $F(x,u,G(x,u))=0$) that admits a $1/2$-singular expansion at some $R>0$, then $G(x,u)$ also admits a $1/2$-singular expansion at some $\rho < R$.
The next result extends this to the case where $F$ has a \textbf{proper} $1/2$-singularity.
The proof follows the same lines and we sketch it next.

\begin{lemma}\label{lem:implicit}
Suppose that $F(\bx,y)$ is a positive function in $t+1$ variables with a proper $1/2$-singularity and
singularity function $R(x_2,\ldots,x_t;y)$ that is fully movable with respect to the variables $x_2,\ldots, x_k$ and $y$
for some $2\le k \le t$. Furthermore assume that $F(x_1,x_2,\ldots,x_t,y) = 0$
if one of the variables $x_1,\ldots,x_k$ are zero. 
Then the functional equation
\begin{equation}\label{eq:implict_lemma}
G =  \exp(F(\bx,G))
\end{equation}
has a unique solution $G = G(\bx)$ with $G({\bf 0}) = 1$ which is a positive function with a proper $1/2$-singularity, too.
Its singularity function $\rho(x_2,\ldots,x_t)$ is fully movable with respect to 
the variables  $x_2,\ldots, x_k$ and satisfies
\begin{align*}
    \rho(x_2,\ldots, x_t) < R\big(x_2,\ldots,x_t; G(\rho(x_2,\ldots, x_t),x_2,\ldots, x_t)\big).
\end{align*}
Moreover, if $F(\bx,y)$ is periodic and analytically continuable with
respect to the variable $x_1$ then the same property holds for $G(\bx)$.
\end{lemma}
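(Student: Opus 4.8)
The plan is to mimic the classical argument for smooth implicit-function singularities (the square-root singularity arising from a ``vertical tangent'' in the equation $G=\exp(F(\bx,G))$), but carried out in the presence of the extra multivariate parameters $x_2,\ldots,x_t$, with the singularity function $\rho$ replacing the usual scalar radius of convergence. First I would fix positive real values $x_2,\ldots,x_t$ in a compact range and study the single-variable equation $G(x_1)=\exp(F(x_1,x_2,\ldots,x_t,G(x_1)))$. Writing $\Phi(x_1,y)=\exp(F(x_1,x_2,\ldots,x_t,y))$, this is a standard smooth implicit-function problem: $\Phi$ is analytic in $(x_1,y)$ for $y$ below the singularity threshold $R(\ldots;y)$, has non-negative coefficients, and $\Phi(0,y)=1$ because $F$ vanishes when $x_1=0$. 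The positive solution grows with $x_1$ until one of two things happens: either $(x_1,G(x_1))$ hits the ``branch point'' where $\partial_y\Phi(x_1,y)=1$ (equivalently $G\,\partial_y F=1$), producing a square-root singularity, or $G(x_1)$ reaches the boundary of analyticity of $F$, i.e.\ $x_1$ reaches $R(x_2,\ldots,x_t;G(x_1))$. The first step is therefore to rule out the second alternative: one shows, exactly as in \cite{D09}, that because $F$ itself has a proper $1/2$-singularity (hence $\partial_y F\to\infty$ as $y\uparrow R$), the branch-point equation $G\,\partial_y F(x_1,\ldots,G)=1$ is reached strictly before $x_1$ attains $R(x_2,\ldots,x_t;G)$; this simultaneously gives the strict inequality $\rho(x_2,\ldots,x_t)<R(x_2,\ldots,x_t;G(\rho,x_2,\ldots,x_t))$ claimed in the statement.

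Next I would extract the local square-root expansion. At the branch point, set up the system $y=\Phi(x_1,y)$, $1=\partial_y\Phi(x_1,y)$; because $\partial_x\Phi>0$ and $\partial_{yy}\Phi>0$ there (both strict, since $F$ has non-negative coefficients and genuinely depends on $y$ and $x_1$), the Weierstrass Preparation Theorem applied to $H(x_1,y):=y-\Phi(x_1,y)$, viewed as analytic in all of $x_1,\ldots,x_t,y$ near the branch point, yields $H=(\text{unit})\cdot(y^2 + p(\bx)y + q(\bx))$ with the discriminant $p^2-4q$ vanishing to first order along the singular set; solving the quadratic gives
\begin{equation*}
    G(\bx) = g_1(\bx) + g_2(\bx)\sqrt{1 - \frac{x_1}{\rho(x_2,\ldots,x_t)}},
\end{equation*}
with $g_1,g_2$ analytic and $g_2\neq 0$, where $\rho(x_2,\ldots,x_t)$ is defined implicitly by the vanishing of the discriminant. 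That $\rho$ is a proper singularity function in the sense of Definition~\ref{def:sing} follows from the IFT: it is analytic on a complex neighbourhood of $\mathbb{R}_+^{t-1}$, positive and real on $\mathbb{R}_+^{t-1}$, and strictly decreasing with negative partials because increasing any $x_j$ increases all coefficients of $F$ hence of $\Phi$, pushing the branch point to smaller $x_1$ (a monotonicity/implicit-differentiation argument on the branch-point system). This establishes Definition~\ref{def:alpha_sing}\textit{(i)--(iii)} with $\alpha=1/2$; conditions~\textit{(ii)}, that $\rho$ is exactly the radius of convergence of $x_1\mapsto G(\bx)$ for fixed positive $x_2,\ldots,x_t$, follows from Pringsheim together with the fact that $G$ is analytic for $x_1<\rho$ and singular at $\rho$.

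For full movability of $\rho$ with respect to $x_2,\ldots,x_k$, I would argue that as $x_j\to 0$ (for $2\le j\le k$) the equation $G=\exp(F(\bx,G))$ degenerates: since $F(\bx,y)=0$ whenever one of $x_1,\ldots,x_k$ is $0$, and $R$ is fully movable in $x_2,\ldots,x_k$ so the singular barrier $R\to\infty$, the branch-point equation is only satisfied for $x_1\to\infty$, i.e.\ $\rho\to\infty$; symmetrically, as $x_j\to\infty$, $R\to 0$ and the barrier is hit arbitrarily early, forcing $\rho\to 0$. This is precisely Definition~\ref{def:sing}\textit{(iii)}. Finally, aperiodicity and analytic continuability of $G$ with respect to $x_1$ are inherited from $F$: for fixed positive $x_2,\ldots,x_t$, $R(\ldots;y)$ is the unique singularity of $F$ on its circle of convergence, and from the structure of the implicit equation one checks $\rho$ is the unique singularity of $x_1\mapsto G(\bx)$ on $|x_1|=\rho$, with analytic continuation to the region \eqref{eq:region_analytic_continuation} obtained by the standard boot-strapping of the IFT along a slit neighbourhood (as in \cite[Chapter VI]{FS09}). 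Uniqueness of the solution with $G(\mathbf{0})=1$ is immediate from the contraction/formal-power-series argument.

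The main obstacle I anticipate is the bookkeeping needed to make ``proper'' uniform in the parameters: one must verify that the analytic functions $g_1,g_2$, the neighbourhood width $\delta$, and the continuation region can all be chosen uniformly for $x_2,\ldots,x_t$ in a compact part of the positive orthant (and a small complex neighbourhood thereof), and that this is compatible with the corresponding uniformity hypotheses built into the proper $1/2$-singularity of $F$. This requires carefully tracking that the WPT factorisation near the branch point has coefficients analytic in \emph{all} variables simultaneously, and that the branch-point set $\{x_1=\rho(x_2,\ldots,x_t)\}$ does not collide with the singular set $\{x_1=R(x_2,\ldots,x_t;y)\}$ of $F$ uniformly — the strict inequality $\rho<R$ must be a \emph{locally uniform} strict inequality, not merely a pointwise one. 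Once that uniformity is in hand, everything else is a parametrised repeat of the univariate argument in \cite[Theorem~2.21]{D09}.
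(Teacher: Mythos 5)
Your proposal is correct and follows essentially the same route as the paper: locate the singularity via the coupled system (functional equation plus branch-point condition $e^{F}\partial_y F=1$), show the branch point is attained strictly before the singular barrier of $F$ because $\partial_y F$ inherits a proper $(-1/2)$-singularity (which also yields $\rho<R$), then use the IFT/implicit differentiation on that system for analyticity and monotonicity of $\rho$, limiting arguments with the vanishing hypothesis and movability of $R$ for full movability, and inheritance of aperiodicity and continuation from $F$. The only cosmetic difference is that you unpack the univariate singular implicit-function step (the paper simply invokes \cite[Theorems~2.19 and 2.21]{D09}) via the Weierstrass Preparation Theorem, which is exactly how that cited result is proved, so no genuinely new ingredient or gap is involved.
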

\begin{proof}
First, by iteration (or by the IFT), Equation \eqref{eq:implict_lemma} admits a unique power series solution with $G({\bf 0}) = 1$ and non-negative coefficients.

Next we define a singularity function $\rho(x_2,\ldots,x_t)$. 
For this purpose we fix $x_2,\ldots,x_t\in\mathbb{R}_+$ and vary $x_1$. 
We claim that there exists a unique $\overline x_1 > 0$ such that for $\overline{\bx} = (\overline x_1, x_2,\ldots, x_t)$ we have $\overline x_1 <  R(x_2,\ldots,x_t, G(\overline{\bx}))$
and satisfying
\begin{equation}\label{eq:implicit_sing}
    G(\overline{\bx}) = \exp(F(\overline{\bx},G(\overline{\bx}))),
\end{equation}
and 
\begin{equation}\label{eq:branch_point}
    1 = \exp(F(\overline{\bx},G(\overline{\bx})))\frac{\partial F}{\partial y} (\overline{\bx},G(\overline{\bx})).
\end{equation}
Since all the coefficients of $G$ are non-negative, the solution function $G$ is strictly increasing in $x_1$. 
Consequently, the factor $\exp(F(\overline{\bx},G(\overline{\bx})))$ in \eqref{eq:branch_point} is also strictly increasing in $x_1$.

Now we study the factor $(\partial F/\partial y)(\overline{\bx},G(\overline{\bx}))$ in \eqref{eq:branch_point}. By assumption we have 
\begin{align*}
    \frac{\partial F}{\partial y}(0,x_2,\ldots,x_t,G(0, x_2,\ldots,x_t)) = 0.
\end{align*}
Moreover, since $F$ is a positive function with a proper $1/2$-singularity, by Lemma \ref{lem:diff_int} $\partial F/\partial y$ is a positive function with a proper $(-1/2)$-singularity, that is, when $x_1\sim R(x_2,\ldots,x_t, y)$ it can be represented as
\begin{align*}
    \frac{\partial F}{\partial y} (\bx,y) = \overline f_1(\bx,y) + \overline f_2(\bx,y) \left( 1 - \frac{x_1}{R(x_2,\ldots,x_t,y)} \right)^{-1/2},
\end{align*}
where $\overline f_1$ and $\overline f_2$ are analytic and $\overline f_2 \ne 0$. 
Since $G$ is strictly increasing in $x_1$ and $R$ is a proper singularity function fully movable in $y$, $R(x_2,\ldots,x_t,G(\bx))$ is strictly decreasing in $x_1$ and goes to $0$ as $x_1\to \infty$. 
Therefore, $\left(1 - x_1/R(x_2,\ldots,x_t,G(\bx)) \right)^{-1/2}$ is strictly increasing in $x_1$ and unbounded while $x_1<R(x_2,\ldots,x_t,G(\bx))$. 
The same is true for $(\partial F/\partial y)(\bx,G(\bx))$ because $\overline f_1(\bx,G(\bx))$ and $\overline f_2(\bx,G(\bx))$ are strictly increasing functions in $x_1$ when $x_1<R(x_2,\ldots,x_t,G(\bx))$. 
Our claim follows.

From \cite[Theorem~2.19]{D09}, which amounts to evaluating the parameter $u$ in $\mathbb{R}_+$ in \cite[Theorem~2.21]{D09}, this implies that for $x_2,\ldots,x_t\in\mathbb{R}_+$ the univariate function $x_1 \to G(\bx)$ has a $1/2$-singularity at $x_1= \overline x_1$. 
Therefore we set
\begin{align*}
    \rho(x_2,\ldots,x_t) := \overline x_1.
\end{align*}

The system formed by equations \eqref{eq:implicit_sing} and \eqref{eq:branch_point} can be used to get more information on $\rho$. 
Notice first that, given $x_2,\ldots, x_t$, the system determines 
$\overline x_1 = \rho(x_2,\ldots,x_t)$ and $G(\overline \bx)$.
Then, since the determinant 
\begin{align*}
    \left| 
    \begin{array}{cc}  
        - e^F\ds\frac{\partial F}{\partial x_1}  & 1 - e^F\ds\frac{\partial F}{\partial y} \\
        - e^F\ds\frac{\partial F}{\partial x_1}\frac{\partial F}{\partial y}   -e^F\ds\frac{\partial^2 F}{\partial y \partial x_1} & 
        - e^F \left( \ds\frac{\partial F}{\partial y} \right)^2  - e^F \ds\frac{\partial^2 F}{\partial y^2}  
    \end{array} 
    \right| 
    &= e^{2F} \left| 
    \begin{array}{cc}  
       \ds \frac{\partial F}{\partial x_1} & 0 \\
       \ds \frac{\partial F}{\partial x_1}\frac{\partial F}{\partial y} + \frac{\partial^2 F}{\partial y \partial x_1} & \left( \ds\frac{\partial F}{\partial y} \right)^2 + \ds\frac{\partial^2 F}{\partial y^2}  
    \end{array} 
    \right| \\
    &= e^{2F} \ds\frac{\partial F}{\partial x_1} \left( \left( \frac{\partial F}{\partial y} \right)^2 + \ds\frac{\partial^2 F}{\partial y^2} \right) 
\end{align*}
is positive, it follows by the IFT that the function $\rho(x_2,\ldots,x_t)$ can be locally analytically continued.
Fix now some $2\le j \le k$. 
By differentiating \eqref{eq:implicit_sing} with respect to $x_j$, one obtains
\begin{align*}
    0 
    &= \frac{\partial [G(\overline \bx)]}{\partial x_j} - \exp(F(\overline \bx, G(\overline \bx))) \left[\frac{\partial F}{\partial x_1} (\overline \bx, G(\overline \bx)) \frac{\partial \rho}{\partial x_j} + \frac{\partial F}{\partial x_j}(\overline \bx, G(\overline \bx)) + \frac{\partial F}{\partial y}(\overline \bx, G(\overline \bx)) \frac{\partial [G(\overline \bx)]}{\partial x_j} \right] \\
    &= - \exp(F(\overline \bx, G(\overline \bx))) \left[\frac{\partial F}{\partial x_1} (\overline \bx, G(\overline \bx)) \frac{\partial \rho}{\partial x_j} + \frac{\partial F}{\partial x_j}(\overline \bx, G(\overline \bx))\right].
\end{align*}
In other words,
\begin{align*}
    \ds\frac{\partial \rho}{\partial x_j} = - \frac{ \ds\frac{\partial F}{\partial x_j}(\overline \bx, G(\overline \bx))}{ \ds\frac{\partial F}{\partial x_1}(\overline \bx, G(\overline \bx))} < 0.
\end{align*}
This means that $\rho$ is strictly decreasing in all variables, provided they are real and positive.
Let us finally consider the behaviour of $\rho$ as $x_j$ tends to $0$ or $+\infty$.
Suppose first that $\overline x_1 = \rho$ is bounded away from 0 when $x_j\to\infty$. 
Then $G(\overline \bx) \to +\infty$, and $R\to 0$. 
However, this is impossible since $\overline x_1 < R$.
Thus $\rho\to 0$ as $x_j\to\infty$.
On the other hand, suppose that $\overline x_1 = \rho$ stays bounded when $x_j\to 0$. 
In this case, $G(\overline \bx)$ stays bounded and so by assumptions on the zeros of $F$, $F(\overline x, G(\overline \bx))) \to 0$ and $(\partial F/\partial y)(\overline x,G(\overline \bx))) \to 0$ as $x_j \to 0$. 
However, this is not possible by \eqref{eq:branch_point}.
Summing up, this means from Definition \ref{def:sing} that the function $\rho$ is a proper singularity function that is fully movable with respect
to $x_2,\ldots, x_k$.

Furthermore, it follows from \cite[Theorem~2.21]{D09} that we also get an expansion of the form (\ref{eq:Gbx_rep_def}) with $\alpha = 1/2$ for $G(\bx)$.
And it remains to check that for $x_2,\ldots,x_t\in\mathbb{R}_+$, the mapping $x_1\mapsto G(\bx)$ admits an analytic continuation away from $\rho$, in a region of the form \eqref{eq:region_analytic_continuation}. 
To that end, we now consider \eqref{eq:implict_lemma} as a functional equation for the function $x_1\mapsto G(\bx)$. 
Since $G(\bx)$ can be written as $G(\bx) = 1 + \tilde G(\bx)$, where $\tilde G$ is a power series with non-negative coefficients, we have that
\begin{align*}
    G(\bx) = \exp(F(\bx, G(\bx))) 
    &= \exp(F(\bx, 1 + \tilde G(\bx))) \\
    &= \exp(F(\bx, 1) + \tilde F(\bx, \tilde G(\bx))) \\
    &= 1 + F(\bx, 1) + \tilde F(\bx, \tilde G(\bx)) + \sum_{n\geq 2} \frac{(F(\bx, 1) + \tilde F(\bx, \tilde G(\bx)))^n}{n!},
\end{align*}
where $\tilde F(\bx,y)$ is a power series with non-negative coefficients. 
Now, since $F(\bx,y)$ is aperiodic in $x_1$, it follows that $G(\bx)$ has to also be aperiodic with respect to $x_1$. 
This implies that $\rho(x_2,\ldots,x_t)$ is the unique dominant singularity of the function $x_1\mapsto G(\bx)$ and we conclude by a standard compactness argument that it can be analytically continued to a region of the form \eqref{eq:region_analytic_continuation}.
\end{proof}

\subsection{Proofs of the main results}\label{subsec:proofs}

Fix  $t\ge 1$ and recall that starting with $G_{t+1}$, which is an explicit monomial, one can recursively obtain the generating functions $G_t, G_{t-1}, \ldots, G_1$, and finally $G_0 = \exp(G_1)$.
Let us next discuss the first step of this induction, from $G_{t+1}$ to $G_t$, since it is slightly different from the general step.

\begin{proposition}\label{prop:init_induction}
Let $t\ge 1$, $x_2,\ldots, x_t\in\mathbb{R}_+$.
Then there exist two functions $h_1(x_1)$ and $h_2(x_1)$, that are analytic and non-zero at $x_1 = 1/e$, such that for $x_1$ in a neighbourhood of $1/e$ we have
\begin{equation}\label{eq:Gt_rep}
    G_t(\bx) = \frac{\prod_{j =1}^t  x_j^{\binom{t}{j}} } {t!} \left( h_1(tX) + h_2(tX) (1 - etX)^{3/2} \right),
    \qquad\text{where } X = \prod_{j =1}^t  x_j^{\binom{t}{j-1}}.
\end{equation}
\end{proposition}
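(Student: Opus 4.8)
The plan is to run a single step of the schema of Figure~\ref{fig:schema}, from $G_{t+1}$ to $G_t$, using Equations \eqref{eq:rooting}--\eqref{eq:integrating} of Lemma~\ref{lem:system} with $k=t$, and then to read off the singular behaviour at $z=1/e$ of the classical tree function $T(z)=\sum_{n\ge1}\frac{n^{n-1}}{n!}z^n$, which satisfies $T(z)=z\,e^{T(z)}$ and whose square-root singularity at $z=1/e$ with $T(1/e)=1$ is well understood.

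\emph{Rooting and the recursive equation.} Starting from the monomial $G_{t+1}(\bx)=\frac{1}{(t+1)!}\prod_{j=1}^t x_j^{\binom{t+1}{j}}$, a direct computation with \eqref{eq:rooting} and Pascal's identity $\binom{t+1}{j}-\binom tj=\binom t{j-1}$ (together with $\binom t{t-1}=t$) gives $G_{t+1}^{(t)}(\bx)=x_t^{\,t}\prod_{j=1}^{t-1}x_j^{\binom t{j-1}}=X$, where $X:=\prod_{j=1}^t x_j^{\binom t{j-1}}$. Substituting $x_t\mapsto x_tG_t^{(t)}(\bx)$ in this monomial (and using once more that $x_t^{\,t}\prod_{j<t}x_j^{\binom t{j-1}}=X$), Equation \eqref{eq:recursive} becomes $W=\exp\!\big(X\,W^{\,t}\big)$ with $W:=G_t^{(t)}(\bx)$. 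Writing $W=e^V$, the relation $\log W=X\,W^{\,t}$ reads $V=X\,e^{tV}$, hence $tV=tX\,e^{tV}$, so that $tV=T(tX)$ is the branch fixed by $G_t^{(t)}(\mathbf 0)=1$. Therefore
\[
G_t^{(t)}(\bx)=\exp\!\Big(\tfrac1t\,T(tX)\Big).
\]

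\emph{Integration.} In \eqref{eq:integrating} (with $k=t$) we must compute $\int_0^{x_t}G_t^{(t)}\,dy$, where along the integration path $X$ becomes $A\,y^{\,t}$ with $A:=\prod_{j=1}^{t-1}x_j^{\binom t{j-1}}$. I would substitute $u=T(tAy^{\,t})$; since $u\,e^{-u}=tAy^{\,t}$ and $T(z)/z=e^{T(z)}$, all exponential factors cancel, leaving the elementary integral $\int_0^{T(tX)}v^{1/t-1}(1-v)\,dv=t\,T(tX)^{1/t}\big(1-\tfrac{T(tX)}{t+1}\big)$, and the remaining algebraic prefactor assembles into $\big(T(tX)/(tA)\big)^{1/t}=e^{T(tX)/t}x_t=x_t\,G_t^{(t)}(\bx)$ — the stray fractional power of $A$ is cancelled exactly by the factor $x_t$, so no fractional power of an individual $x_j$ survives. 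Hence $\int_0^{x_t}G_t^{(t)}\,dy=x_t\,G_t^{(t)}(\bx)\big(1-\tfrac{T(tX)}{t+1}\big)$, and plugging into \eqref{eq:integrating} and using $\binom tt=1$ to absorb the leftover $x_t$ yields
\[
G_t(\bx)=\frac{\prod_{j=1}^t x_j^{\binom tj}}{t!}\;f\big(T(tX)\big),\qquad f(T):=e^{T/t}\Big(1-\tfrac{T}{t+1}\Big).
\]

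\emph{Singular expansion and conclusion.} Near $z=1/e$ one has $T(z)=\varphi\big(\sqrt{1-ez}\big)$ for a function $\varphi$ analytic at $0$ with $\varphi(0)=1$ and $\varphi(w)-1=-\sqrt2\,w+\tfrac23w^2+O(w^3)$ (the classical square-root expansion of $T$). As $f$ is entire, $f(T(z))=(f\circ\varphi)\big(\sqrt{1-ez}\big)$ is analytic in $\sqrt{1-ez}$, so it expands as $\sum_{m\ge0}c_m(1-ez)^{m/2}$; the coefficient of $(1-ez)^{1/2}$ equals $f'(1)\varphi'(0)$, and since $f'(T)=e^{T/t}\frac{1-T}{t(t+1)}$ we get $f'(1)=0$. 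Gathering even and odd powers gives $f(T(z))=h_1(z)+h_2(z)(1-ez)^{3/2}$ with $h_1,h_2$ analytic at $1/e$, $h_1(1/e)=f(1)=\frac{t\,e^{1/t}}{t+1}\neq0$, and — using $f''(1)=\frac{-e^{1/t}}{t(t+1)}$, $f'''(1)=\frac{-2e^{1/t}}{t^2(t+1)}$ — also $h_2(1/e)=c_3=\frac{2\sqrt2\,e^{1/t}}{3t^2}\neq0$. Substituting $z=tX$ gives \eqref{eq:Gt_rep}. The only genuinely delicate step is the integration: one must pick the substitution $u=T(tAy^{\,t})$ so that the exponential factors cancel and then check that the algebraic bookkeeping leaves no fractional powers of the $x_j$; the remainder is binomial identities, the standard tree-function expansion, and one short Taylor computation to see $h_2(1/e)\neq0$.
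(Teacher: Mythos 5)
Your proposal is correct, and up to one step it follows the paper's own route: both derive $G_{t+1}^{(t)}=X$, solve \eqref{eq:recursive} via the tree function to get $G_t^{(t)}=\exp(T(tX)/t)$, arrive at the same closed form $G_t=\frac{\prod_j x_j^{\binom tj}}{t!}e^{T(tX)/t}\bigl(1-\tfrac{T(tX)}{t+1}\bigr)$, and then read off the $3/2$-singularity from the Puiseux expansion of $T$ at $1/e$, the key point being that the coefficient of $(1-etX)^{1/2}$ cancels because $f'(1)=0$ for $f(T)=e^{T/t}(1-T/(t+1))$ (your values $f(1)$, $f''(1)$, $f'''(1)$ and the resulting $h_2(1/e)=\tfrac{2\sqrt2\,e^{1/t}}{3t^2}$ agree with the paper's displayed expansion). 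The one genuine difference is how you pass from $G_t^{(t)}$ to $G_t$: the paper deliberately avoids the integral in \eqref{eq:integrating} by invoking the dissymmetry identity \eqref{eq:dissymetry} of Lemma~\ref{lem:combinatorial_integration}, which expresses $G_t$ algebraically in $G_t^{(t)}$ and $G_{t+1}$, whereas you evaluate $\int_0^{x_t}G_t^{(t)}\,dy$ directly with the substitution $u=T(tAy^t)$, reducing it to $\int_0^{T(tX)}u^{1/t-1}(1-u)\,du$; I checked that the exponential factors do cancel and that the prefactor assembles to $x_tG_t^{(t)}$, so your computation is sound. What each route buys: the dissymmetry argument is purely combinatorial-algebraic and sidesteps any discussion of fractional powers or integration constants (your version implicitly needs the lower limit $0$, justified because $G_t$ is divisible by $x_t$, and a quick alternative sanity check is to differentiate your closed form in $x_t$); your explicit integration shows the combinatorial-integration lemma is not actually needed for this initial step, at the cost of a slightly more delicate change of variables. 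Your extra verification that $h_1(1/e)\ne 0$ and $h_2(1/e)\ne 0$ is welcome detail that the paper only states implicitly through its expansion.
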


\begin{proof}
From Equation \eqref{eq:G_t+1} and by \eqref{eq:rooting} we directly get
\begin{equation*}
    G_{t+1}^{(t)}(\bx) = \prod_{j =1}^t  x_j^{\binom{t}{j-1}}.
\end{equation*}
Consequently, from the relation \eqref{eq:recursive} the function $G_t^{(t)} = G_t^{(t)}(\bx)$ satisfies the equation
\begin{equation*}
    G_t^{(t)} = \exp\left( \prod_{j =1}^t  x_j^{\binom{t}{j-1}}    [G_t^{(t)}]^t \right).
\end{equation*}

Let $T(z)$ denote the \textit{tree function}, i.e. that satisfies the equation $T(z) = z\exp(T(z))$.
Then, using the change of variable $z = tX$ with $X = \prod_{j =1}^t  x_j^{\binom{t}{j-1}}$, we can represent $G_t^{(t)}(\bx)$ as 
\begin{equation*}
    G_t^{(t)}(\bx) = \left( \frac{T(tX)}{tX} \right)^{1/t} = \exp\left( T(tX)/t \right).
\end{equation*}
With the help of (\ref{eq:dissymetry}) and the relation $T(z) = z\exp(T(z))$, this also leads to
\begin{align}
    G_t(\bx) &= \frac{1}{t!} \prod_{j=1}^t x_j^{\binom{t}{j}} G_t^{(t)}(\bx) 
    - \frac t{(t + 1)!} \prod_{j=1}^t x_j^{\binom{t + 1}{j}} \left[ G_t^{(t)}(\bx) \right]^{t+1}  \nonumber \\
    &= \frac{\prod_{j=1}^t x_j^{\binom{t}{j}}}{t!} \exp\left( \frac{T(tX)}{t} \right) \left( 1 - \frac{T(tX)}{t + 1} \right).
    \label{eqGtrep}
\end{align}
It is well known (see for instance \cite{FS09}) that $T(z)$ has its dominant singularity at $z_0 = 1/e$ and a local 
Puiseux expansion, when $z$ is in a neigbourhood of $z_0$, of the form
\begin{equation*}
    T(z) = 1 - \sqrt{2} \sqrt{1- ez} + \frac 23 (1-ez) - \frac{11\sqrt 2}{36} (1-ez)^{3/2} + O\left( (1 - ez)^2 \right).
\end{equation*}
Furthermore, $z_0 = 1/e$ is the only singularity on the circle $|z| = 1/e$ and $T(z)$ can be analytically
continued to a region of the form $|z| < 1/e + \delta/2$, $|z - 1/e| > \delta$ for some $\delta > 0$.

Hence, setting $z=tX$, we get 
\begin{align*}
    \exp\left( \frac{T(z)}t \right) \left( 1 - \frac{T(z)}{t + 1} \right) 
    &= \frac{te^{1/t}}{t + 1} \left( 1 - \frac{1}{t^2} (1 - ex_1) + \frac{2\sqrt{2}(t + 1)}{3t^3} (1 - ex_1)^{3/2} + O\left( (1 - ex_1)^2 \right) \right) \\
    &= h_1(x_1) + h_2(x_1) (1 - ex_1)^{3/2},
\end{align*}
where $h_1(x_1)$ and $h_2(x_1)$ are functions that are analytic and non-zero for $x_1$ in a neighbourhood of $1/e$.
This directly leads to the claimed local representation of $G_t(\bx)$.
\end{proof}

Note that the appearance of the dominant singularity $(1 - etX)^{3/2}$ is not unexpected since $G_t^{(t)}(\bx)$ has a dominant singularity of the form $\sqrt{1 - etX}$ and $G_t(\bx)$ and is as per \eqref{eq:integrating} -- more or less -- the integral of $G_t^{(t)}(\bx)$.

\medskip

Furthermore, one can deduce from Proposition~\ref{prop:init_induction} the case $k=t$ of Theorem~\ref{thm:enumeration} by a direct application of Proposition \ref{prop:transfer} (setting $u=1$).
Similarly, a central limit theorem for the case $k=t$ of Theorem~\ref{thm:limit_law} follows immediately from Proposition~\ref{prop:init_induction} by an application of \cite[Theorem~2.25]{D09}.

\medskip

For $k< t$, Theorems~\ref{thm:enumeration} and \ref{thm:limit_law} can also be deduced from local representations of a form similar to \eqref{eq:Gt_rep}, modulo some technical conditions.
Thus the main step of the proofs is to show that the above representation for $G_t(\bx)$ implies corresponding representation for $G_{t-1}(\bx), G_{t-2}(\bx), \ldots, G_1(\bx)$.
This is the object of the next proposition.
Before stating it, let us remark the following.
\begin{remark}\label{rem:zeros}
For $k\in[t+1]$, the function $G_k(\bx)$ admits $\prod_{j\in[k]} x_j^{\binom{k}{j}}$ as a factor.
If $k = t+1$, this is Equation \eqref{eq:G_t+1}.
For $k\leq t$, it follows from Equation \eqref{eq:integrating}.
And by \eqref{eq:rooting} this implies that the function $G_{k}^{(k-1)}(\bx)$ has $\prod_{j\in[k]} x_j^{\binom{k-1}{j-1}}$ as a factor.
In particular, $G_{k}^{(k-1)}(\bx)$ is zero if one of the variables $x_1,\ldots, x_k$ are zero.
\end{remark}

\begin{proposition}\label{prop:inductive_step}
Suppose that $2 \le k \le t$ and let $G_k(\bf x)$ be a positive function with a proper $3/2$-singularity that is aperiodic and analytically continuable with respect to $x_1$, and with a proper singularity function $\rho_k(x_2,\ldots,x_t)$ that is fully movable with respect to $x_2,\ldots, x_k$.

Then the function $G_{k-1}(\bf x)$ is also a positive function with a proper $3/2$-singularity that is aperiodic and analytically continuable with respect to $x_1$, where the singularity function $\rho_{k-1}(x_2,\ldots,x_t)$ 
is again fully movable with respect to $x_2\ldots, x_k$.    
Moreover, if $x_2,\ldots, x_t\in\mathbb{R}_+$ then 
\begin{equation}\label{eq:prop_inductive_step}
    \rho_{k-1}(x_2,\ldots, x_t) < \rho_k(x_2,\ldots, x_t).
\end{equation}
\end{proposition}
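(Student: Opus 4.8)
The plan is to propagate the singular behaviour along the three functional equations of Lemma~\ref{lem:system}, with the index $k$ there replaced by $k-1$: Equation~\eqref{eq:rooting} gives $G_k^{(k-1)}$ as a partial derivative of $G_k$ times a monomial, Equation~\eqref{eq:recursive} gives $G_{k-1}^{(k-1)}$ implicitly in terms of $G_k^{(k-1)}$, and Equation~\eqref{eq:integrating} gives $G_{k-1}$ as an integral of $G_{k-1}^{(k-1)}$ times a monomial. I would apply Lemma~\ref{lem:diff_int} at the first and third arrows and Lemma~\ref{lem:implicit} at the middle one, keeping $x_1$ as the leading variable throughout, and use Remark~\ref{rem:zeros} for the vanishing hypotheses. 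When $k=2$ the differentiation and integration act on the leading variable $x_1$ itself and the substitution in \eqref{eq:recursive} rescales $x_1$; these degenerate cases are covered by Lemma~\ref{lem:diff_int} with parameter $1$, whose movability hypothesis is then vacuous, and by the observation below that $R=\rho_2/y$ is still fully movable in $x_2$ and $y$, so no genuinely separate argument is needed (compare the separate treatment of the first induction step in Proposition~\ref{prop:init_induction}).

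For the first arrow, since $\rho_k$ is fully movable with respect to $x_2,\ldots,x_k$ it is also fully movable with respect to $x_2,\ldots,x_{k-1}$, so Lemma~\ref{lem:diff_int} shows $\frac{\partial}{\partial x_{k-1}}G_k(\bx)$ is a positive function with a proper $1/2$-singularity, aperiodic and analytically continuable with respect to $x_1$, with the same singularity function $\rho_k$. By Remark~\ref{rem:zeros} the division by the monomial prefactor of \eqref{eq:rooting} still yields a power series with non-negative coefficients ($G_k^{(k-1)}$ in fact carries $\prod_{j\in[k]}x_j^{\binom{k-1}{j-1}}$ as a factor), and near $x_1=\rho_k>0$ and on the region \eqref{eq:region_analytic_continuation} that monomial is analytic, non-vanishing and creates no period; hence $G_k^{(k-1)}(\bx)$ is a positive function with a proper $1/2$-singularity, aperiodic and analytically continuable with respect to $x_1$, with singularity function $\rho_k$ fully movable with respect to $x_2,\ldots,x_k$, and it vanishes whenever one of $x_1,\ldots,x_k$ is zero.

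The middle arrow is the crux. Equation~\eqref{eq:recursive} reads $G_{k-1}^{(k-1)}=\exp\!\big(F(\bx,G_{k-1}^{(k-1)})\big)$ with $F(\bx,y)=G_k^{(k-1)}(x_1,\ldots,x_{k-2},x_{k-1}y,x_k,\ldots,x_t)$, and I would check that $F$ meets the hypotheses of Lemma~\ref{lem:implicit} with the $k$ there being the present $k$. Substituting $x_{k-1}\mapsto x_{k-1}y$ into the proper $1/2$-singular expansion of $G_k^{(k-1)}$ from the previous step shows $F$ is a positive function in $t+1$ variables with a proper $1/2$-singularity in the leading variable $x_1$ and singularity function $R(x_2,\ldots,x_t;y)=\rho_k(x_2,\ldots,x_{k-2},x_{k-1}y,x_k,\ldots,x_t)$ (which reads $\rho_2(x_2,\ldots,x_t)/y$ when $k=2$). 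Since $\rho_k$ is strictly decreasing in each argument, since the slot of $x_{k-1}$ is among the ones with respect to which $\rho_k$ is fully movable, and since $x_{k-1}y\to\infty$ (resp.\ $0$) whenever $x_{k-1}$ or $y$ tends to $\infty$ (resp.\ $0$), $R$ is a proper singularity function that is fully movable with respect to $x_2,\ldots,x_k$ \emph{and} with respect to $y$. Finally $F$ vanishes when one of $x_1,\ldots,x_k$ is zero by Remark~\ref{rem:zeros}, and it is aperiodic and analytically continuable with respect to $x_1$ because $G_k^{(k-1)}$ is and the substitution does not touch $x_1$. Lemma~\ref{lem:implicit} then gives that $G_{k-1}^{(k-1)}$ --- the unique solution with constant term $1$, which is the combinatorial generating function --- is a positive function with a proper $1/2$-singularity, aperiodic and analytically continuable with respect to $x_1$, with a singularity function $\sigma_{k-1}(x_2,\ldots,x_t)$ fully movable with respect to $x_2,\ldots,x_k$ and such that $\sigma_{k-1}<R\big(x_2,\ldots,x_t;\,G_{k-1}^{(k-1)}(\sigma_{k-1},x_2,\ldots,x_t)\big)$.

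For the third arrow, $\sigma_{k-1}$ is fully movable with respect to $x_{k-1}$, so Lemma~\ref{lem:diff_int} applied to $\int_0^{x_{k-1}}G_{k-1}^{(k-1)}(x_1,\ldots,x_{k-2},y,x_k,\ldots,x_t)\,dy$ makes it a positive function with a proper $3/2$-singularity, aperiodic and analytically continuable with respect to $x_1$, with singularity function $\sigma_{k-1}$; multiplying by the monomial prefactor of \eqref{eq:integrating} preserves this as before, and setting $\rho_{k-1}:=\sigma_{k-1}$ yields the asserted structure of $G_{k-1}(\bx)$. For the strict inequality \eqref{eq:prop_inductive_step}, note $W_0:=G_{k-1}^{(k-1)}(\sigma_{k-1},x_2,\ldots,x_t)\ge 1$ for $x_2,\ldots,x_t\in\mathbb{R}_+$ (non-negative coefficients, constant term $1$); since $\rho_k$ is strictly decreasing in the slot of $x_{k-1}$ and $x_{k-1}W_0\ge x_{k-1}$, we get $R(x_2,\ldots,x_t;W_0)\le\rho_k(x_2,\ldots,x_t)$, hence $\rho_{k-1}=\sigma_{k-1}<R(x_2,\ldots,x_t;W_0)\le\rho_k(x_2,\ldots,x_t)$. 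I expect the main obstacle to be the middle arrow: correctly pinning down the singularity function $R$ of the composition $F$ and verifying that it is fully movable in exactly the right variables, in particular the auxiliary variable $y$ (which is what makes Lemma~\ref{lem:implicit} apply). The remaining points --- non-negativity of coefficients and preservation of aperiodicity and analytic continuability under the monomial prefactors, plus the $k=2$ degeneracy --- are routine verifications.
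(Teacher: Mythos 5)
Your proposal is correct and follows essentially the same route as the paper: the same three-step transfer along \eqref{eq:rooting}, \eqref{eq:recursive} and \eqref{eq:integrating}, using Lemma~\ref{lem:diff_int} for the rooting and integration steps, Lemma~\ref{lem:implicit} (with Remark~\ref{rem:zeros} supplying the vanishing hypothesis) for the implicit step, and the bound $G_{k-1}^{(k-1)}\ge 1$ for the strict inequality \eqref{eq:prop_inductive_step}. The only difference is bookkeeping: the paper temporarily switches the leading variable to $x_{k-1}$ via Lemma~\ref{lem:singularity_change} so that the singularity function of $F$ becomes $\kappa_k/y$ and switches back at the end, whereas you keep $x_1$ as leading variable and absorb the substitution into $R(x_2,\ldots,x_t;y)=\rho_k(x_2,\ldots,x_{k-2},x_{k-1}y,x_k,\ldots,x_t)$, which verifies the same hypotheses of Lemma~\ref{lem:implicit}.
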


\begin{proof}
In a first step, using Lemma~\ref{lem:singularity_change} we replace $\rho_k(x_2,\ldots,x_t)$ by the proper singularity function $\kappa_k = \kappa_k(x_1,\ldots,x_{k-2},x_{k},\ldots, x_t)$, that is fully movable with respect to $x_1,\ldots,x_{k-2}$, so that we can represent $G_k(\bx)$ as
\begin{align*}
    G_k(\bx) = \overline g_1(\bx) + \overline g_2(\bx) 
    \left( 1- \frac{x_{k-1}}{\kappa_k} \right)^{3/2}.
\end{align*}

Next, we deduce from Lemma~\ref{lem:diff_int} and the relation \eqref{eq:rooting} that $G_{k}^{(k-1)}(\bx)$ is a positive function with a proper $1/2$-singularity and admits the same proper singularity function $\kappa_k$ as $G_k(\bx)$, in particular it is fully movable with respect to $x_1,\ldots,x_{k-2}$. 

From there we apply Lemma~\ref{lem:implicit} to the relation \eqref{eq:recursive}, noting that
\begin{align*}
    F(\bx,y) = G_{k}^{(k-1)}(x_1,\ldots, x_{k-2}, x_{k-1}y, x_k,\ldots, x_t)
\end{align*}
is a positive function with a proper $1/2$-singularity and that by Remark \ref{rem:zeros} $F(\bx,y)$ has zeros $x_1,\ldots, x_k$.
Furthermore, it admits a proper singularity function given by
\begin{align*}
    R(x_1,\ldots,x_{k-2},x_k,\ldots, x_t,y) = \frac 1y \kappa_k(x_1,\ldots,x_{k-2},x_k,\ldots, x_t).
\end{align*}
Clearly $R$ is fully movable in $x_1,\ldots, x_{k-2},x_k$ and $y$.
Consequently, using Lemma \ref{lem:implicit} the solution function $y=G_{k-1}^{(k-1)}(\bx)$ is a positive function with a proper $1/2$-singularity, and leading variable $x_{k-1}$, for which the singularity function $\kappa_{k-1} = \kappa_{k-1}(x_1,\ldots,x_{k-2},x_k,\ldots, x_t)$ satisfies
\begin{align*}
    \kappa_{k-1}(x_1,\ldots,x_{k-2},x_k,\ldots, x_t) < \frac{\kappa_k(x_1,\ldots,x_{k-2},x_k,\ldots, x_t)}{G_{k-1}^{(k-1)}(\bx)}.
\end{align*}
Note that $G_{k-1}^{(k-1)}({\bf 0}) = 1$. 
Hence, $G_{k-1}^{(k-1)}(\bx) > 1$, and it follows that
\begin{align*}
    \kappa_{k-1}(x_1,\ldots,x_{k-2},x_k,\ldots, x_t) < \kappa_k(x_1,\ldots,x_{k-2},x_k,\ldots, x_t).
\end{align*}

Finally, we apply Lemma~\ref{lem:diff_int} on relation \eqref{eq:integrating} and obtain that $G_{k-1}(\bx)$ is a positive function with a proper $3/2$-singularity and leading variable $x_{k-1}$. 
By another application of Lemma~\ref{lem:singularity_change} we see that we can change it back to the leading variable $x_1$, such that the corresponding proper singularity function $\rho_{k-1}(x_2,\ldots,x_k)$ satisfies \eqref{eq:prop_inductive_step}.
\end{proof}

We are now in a position to prove the two main results of the paper.

\paragraph{Proof of Theorem \ref{thm:enumeration}.}

Proposition~\ref{prop:init_induction} implies that $G_t(\bx)$ is a positive function with a proper $3/2$-singularity, and is aperiodic and analytically continuable with respect to $x_1$.
In particular, compare \eqref{eq:Gbx_rep_def} with \eqref{eq:Gt_rep} and note that $x_1$ appears in $X$ only in the first power $x_1^{\binom{t}{0}}$.
In this case, the proper singularity function is explicitly given by
\begin{equation*}
    \rho_t(x_2,\ldots,x_t) = \frac 1{et} \prod_{j=2}^t  x_j^{-\binom{t}{j-1}}.
\end{equation*}

From there, successive applications of Proposition~\ref{prop:inductive_step} imply that the function $G_k(\bx)$ also has these properties for each $k\in [t]$. 
Since the exponential is an entire function this also holds for $G(\bx) = G_0(\bx) = \exp(G_1(\bx))$.
And we conclude the proof by setting $x_2 = \cdots = x_t = 1$ then applying Proposition \ref{prop:transfer}. \qed

\paragraph{Proof of Theorem \ref{thm:limit_law}.}

Suppose that $(x_2,\ldots,x_t)$ is in a sufficiently small complex neighbourhood $U$ of $(1,\ldots, 1)$ in $\mathbb{C}^{t-1}$.
From Proposition \ref{prop:init_induction} and $k-1$ successive applications of Proposition~\ref{prop:inductive_step}, we derive a local representation of the form \eqref{eq:Gbx_rep_def} holds for $G(\bx) = G_k(\bx)$, with $\rho(x_2,\ldots,x_k) = \rho_k(x_2,\ldots,x_k)$ and $\alpha=3/2$, when $x_1$ is close to $\rho_k(x_2,\ldots,x_k)$.
Furthermore, by continuity there exists $\delta> 0$ such that the function $x_1 \mapsto G_k(\bx)$ is still analytically continuable to a region of the form \eqref{eq:region_analytic_continuation}, with $\rho = \rho_k$.
And we deduce from Proposition \ref{prop:transfer} the following asymptotic estimate for the coefficients of $x_1$ in $G_k(\bx)$
\begin{equation*}
    [x_1^n]\, G_k(\bx) = C_k(x_2,\ldots, x_k) \, n^{-5/2} \, \rho_k(x_2,\ldots,x_t)^{-n} \, (1 + o(1))
    \qquad\text{as } n\to\infty,
\end{equation*}
for some non-zero function $C_k(x_2,\ldots, x_k)$ analytic in $U$.
This leads to a \textit{quasi-power} situation for the probability generating function 
\begin{equation*}
    \mathbb{E}\left[ x_2^{X_2}\cdots x_t^{X_t} \right] 
    = \frac{ [x_1^n]\, G_k(\bx) } {[x_1^n]\, G_k(x_1,1,\ldots,1) } 
    \sim  \frac{ C_k(x_2,\ldots, x_k)} {C_k(1,\ldots, 1) }  \left( \frac {\rho_k(1,\ldots,1) } { \rho_k(x_2,\ldots,x_t) } \right)^n.
\end{equation*}

Finally, setting $x_j = e^{u_j}$ and $\lambda_n = n$ in \cite[Theorem~2.22]{D09} implies the claimed joint central limit theorem.
Note that one could alternatively apply \cite[Theorem~2.25]{D09}.
Furthermore, the relation $G_0(\bx) = \exp\left(G_1(\bx)\right)$ implies, as above, that $G(\bx) = G_0(\bx)$ has the same singularities and singular expansion as $G_1(\bx)$, up to a multiplicative constant.
This concludes the proof.  \qed

%
%
\section{Concluding remarks}\label{sec:conclusion}

With the help of a computer algebra system, making use of the representation in Lemma \ref{lem:combinatorial_integration}, we have been able to compute the following table of numerical values for the singularities $\rho_{t,k}$. 
We have stopped at $t=7$ since the size of the system of functional equations needed to determine $\rho_{t,k}$ grows too fast.
\begin{table}[h]
\centering
\begin{tabular}{c|ccccccc}
    \toprule
    & $k=1$ & $k=2$ & $k=3$ & $k=4$ & $k=5$ & $k=6$ & $k=7$\\
    \midrule
    $t=1$ & 0.36788 \\ 
    $t=2$ & 0.14665 & 0.18394 \\
    $t=3$ & 0.07703 & 0.08421 & 0.12263 \\
    $t=4$ & 0.04444 & 0.04662 & 0.05664 & 0.09197 \\
    $t=5$ & 0.02657 & 0.02732 & 0.03092 & 0.04152 & 0.07358 \\
    $t=6$ & 0.01608 & 0.01635 & 0.01773 & 0.02184 & 0.03214 & 0.06131 \\
    $t=7$ & 0.00974 & 0.00984 & 0.01038 & 0.01204 & 0.01614 & 0.02583 & 0.05255\\
    \bottomrule
\end{tabular}
\caption{Approximations of the radii of convergence of the generating functions counting $k$-connected chordal graphs with tree-width at most $t$ for small values of $t$ and $k$.}\label{tab:values_rho}
\end{table}

Let us mention a recent result giving an estimate $cn^{-5/2} \gamma^n n!$ for the number of labelled \emph{planar} chordal graphs with $\gamma \approx 11.89$ \cite{CNR23}. 
Is is easy to see that the class of chordal graphs with tree-width at most three is exactly the same as the class of chordal graphs not containing $K_5$ as a minor, whose asymptotic growth is, according to Theorem \ref{thm:enumeration} and the table above, of the form $cn^{-5/2} \delta^n n!$ with $\delta = 1/\rho_{3,1}\approx 12.98$. 

Furthermore, notice that if we denote by $C(x)$ and $B(x)$, respectively, the generating functions of connected and 2-connected graphs in $\G_{t,0}$, then Equation \ref{eq:recursive} reads for $k=1$
\begin{align*}
    C'(x) = \exp(B'(xC'(x)).
\end{align*}
If $\rho_C$ and $\rho_B$ are the singularities of $C(x)$ and $B(x)$, respectively, the condition for being subcritical is that $\rho_C C'(\rho_C) < \rho_B$, so that the singularity of $C(x)$ arises as a branch-point in the former equation and is not inherited by that of $B(x)$; in our case this condition is safistied because of Lemma~\ref{lem:implicit}. 

Since the number  of all chordal graphs grows like $2^{n^2/4}$, we know that the singularity $\rho_t=\rho_{t,1}$ of  chordal graphs with tree-wdith at most $t$ goes to 0 as $t\to \infty$.
The question is at which rate $\rho_t \to 0$ as $t\to\infty$.
Since the exponential growth of $t$-trees is $(etn)^n$, we have $\rho_t = O(1/t)$. 
And since the growth of all graphs of tree-width at most $t$ is at most $(2^ttn)^n$, we also have $\rho_t = \Omega(1/(t2^t))$.
We leave as an open problem to narrow the gap between the upper and lower bound. 
Heuristic arguments suggest that $\rho_t$ decreases exponentially in $t$. 

As a final question, we consider letting $t=t(n)$ grow  with $n$. 
Recall that a  class of labelled graphs is small when the number of graphs in the class grows at most like $c^n n!$ for some $c>0$, and large  otherwise.
We know that  the class of all chordal graphs is large, while the class of chordal graphs with tree-width at most $t$ is small for fixed $t$. 
Let us see that if  $t = (1+\epsilon)(\log n)$  then the class is large for each $\epsilon>0$. 
A graph is split if the vertex set can be partitioned into a clique and an independent set. 
It is well-known and easy to prove that split graphs are chordal. 
Consider split graphs with a clique of size $t=(1+\epsilon)\log n$ and the complement an independent set, so that he largest clique is of size at most $t+1$ and the tree-width at most $t$. 
Every edge between the clique and the complement can be chosen independently, hence there are at least 
$$2^{(1+\epsilon)\log n (n-(1+\epsilon)\log n)}$$ 
such graphs, a quantity that  grows faster than  $c^n n!$ for every $c>0$. 
We leave as an open problem to determine at which order of magnitude between $t=O(1)$ and $t=\log n$ the class ceases to be small.

\subsection*{Acknowledgements}

We would like to thank the anonymous referee for carefully reading a first version of this work and for their suggestions and comments that improved the presentation of the paper.

Additionally, we gratefully acknowledge earlier discussions with Juanjo Ru\'e and Dimitrios Thilikos on the problem of counting chordal graphs with bounded tree-width.

The authors acknowledge support from the Marie Curie RISE research network ``RandNet'' MSCA-RISE-2020-101007705.
Moreover, M.D. was supported by the Special Research Program SFB F50-02 ``Algorithmic and Enumerative Combinatorics'', and by the project P35016 ``Infinite Singular Systems and Random Discrete Objects'' of the FWF (Austrian Science Fund).
Additionally, M.N. and C.R. acknowledge the financial support of the Spanish State Research Agency through projects MTM2017-82166-P and PID2020-113082GB-I00, while M.N. acknowledges support from the Severo Ochoa and Mar\'ia de Maeztu Program for Centers and Units of Excellence (CEX2020-001084-M), and C.R. acknowledges support from the grant Beatriu de Pin\'os BP2019, funded by the H2020 COFUND project No 801370 and AGAUR (the Catalan agency for management of university and research grants).

%
%
\bibliographystyle{abbrv}
\bibliography{biblio_chordal_tw}

\end{document}